\documentclass[11pt]{amsart}

\usepackage[utf8]{inputenc}

\usepackage{amssymb}
\usepackage{upref}
\RequirePackage{doi}
\usepackage{hyperref}
\usepackage[capitalize]{cleveref}
\usepackage{mathtools}
\usepackage{tikz-cd}
\usepackage{graphicx, caption}
\usepackage{comment,enumerate}
\usepackage{thmtools}
\usepackage{thm-restate}

\usepackage{adjustbox,lipsum}

\numberwithin{equation}{section}

\makeatletter \renewcommand{\p@enumii}{} \makeatother

\newtheorem{thm}{Theorem}[section]
\newtheorem{lemma}[thm]{Lemma}
\newtheorem{prop}[thm]{Proposition}
\newtheorem{cor}[thm]{Corollary}

\newtheorem{construction}[thm]{Construction}

\crefformat{lem}{Lemma~#2\textup{#1}#3}
\Crefname{lem}{Lemma}{Lemmas}
\crefname{lem}{lemma}{lemmas}
\crefmultiformat{lem}{Lemmas~#2\textup{#1}#3}{ and~#2\textup{#1}#3}{, #2\textup{#1}#3}{ and~#2\textup{#1}#3}
\crefformat{cor}{Corollary~#2\textup{#1}#3}
\Crefname{cor}{Corollary}{Corollaries}
\crefname{cor}{corollary}{corollaries}
\crefformat{prop}{Proposition~#2\textup{#1}#3}
\Crefname{prop}{Proposition}{Propositions}
\crefname{prop}{proposition}{propositions}
\crefmultiformat{prop}{Propositions~#2\textup{#1}#3}{ and~#2\textup{#1}#3}{, #2\textup{#1}#3}{ and~#2\textup{#1}#3}
\crefformat{thm}{Theorem~#2\textup{#1}#3}
\Crefname{thm}{Theorem}{Theorems}
\crefname{thm}{theorem}{theorems}

\theoremstyle{definition}

\newtheorem{definition}[thm]{Definition}

\newtheorem*{notation*}{Notation}

\theoremstyle{remark}

\crefmultiformat{rem}{Remarks~#2\textup{#1}#3}{ and~#2\textup{#1}#3}{, #2\textup{#1}#3}{ and~#2\textup{#1}#3}
\crefformat{rem}{Remark~#2\textup{#1}#3}
\crefformat{rems}{Remark~#2\textup{#1}#3}
\Crefname{rem}{Remark}{Remarks}
\crefname{rem}{remark}{remarks}

\crefformat{figure}{Figure~#2\textup{#1}#3}

\newcounter{case}
\newenvironment{case}[1][\unskip]{\refstepcounter{case}\bf
\medskip \noindent Case \thecase\ #1.\ \it}{\unskip\upshape}
\numberwithin{case}{thm}
\renewcommand{\thecase}{\arabic{case}}
\crefformat{case}{Case~#2\textup{#1}#3}
\Crefname{case}{Case}{Cases}
\crefname{case}{case}{cases}

\newcounter{subcase}
\newenvironment{subcase}[1][\unskip]{\refstepcounter{subcase}\bf
\medskip \noindent Subcase \thesubcase\ #1.\ \it}{\unskip\upshape}
\numberwithin{subcase}{case}
\crefformat{subcase}{Subcase~#2\textup{#1}#3}
\crefname{subcase}{subcase}{subcases}
\Crefname{subcase}{Subcase}{Subcases}

\newcounter{subsubcase}

\numberwithin{subsubcase}{subcase}
\crefformat{subsubcase}{Subsubcase~#2\textup{#1}#3}
\crefname{subsubcase}{subsubcase}{subsubcases}
\Crefname{subsubcase}{Subsubcase}{Subsubcases}

\usepackage{color}
\theoremstyle{definition}
\newtheorem{aid}{}
\numberwithin{aid}{section}
\newcommand{\oldaid}{}
\let\oldaid=\aid
\renewcommand{\aid}{\bigbreak\oldaid}
\newcommand{\oldendaid}{}
\let\oldendaid=\endaid
\renewcommand{\endaid}{\oldendaid\bigskip\hrule width\textwidth \bigbreak}
\crefformat{aid}{Note~#2\textup{#1}#3}

\let\oldenumerate=\enumerate
\def\enumerate{\oldenumerate \itemsep=\smallskipamount}
\let\olditemize=\itemize
\def\itemize{\olditemize \itemsep=\smallskipamount}

\newcommand{\floor}[1]{\lfloor #1 \rfloor}
\DeclareMathOperator{\Aut}{Aut}

\DeclareMathOperator{\id}{id}
\DeclareMathOperator{\Sym}{Sym}

\DeclareMathOperator{\supp}{supp}
\DeclareMathOperator{\DihGrp}{D}

\newcommand{\UTFC}{\mathcal{UTFC}}
\newcommand{\DU}{\mathcal{UTFD}}

\newcommand{\TW}{\mathcal{T}}
\newcommand{\CX}{\mathcal{CX}}
\newcommand{\U}{\mathcal{U}}
\newcommand{\CA}{\mathcal{CA}}
\newcommand{\UTF}{\mathcal{UTF}}

\newcommand{\pref}[1]{\textup(\ref{#1}\textup)}

\newcommand{\fullcref}[2]{\cref{#1}\pref{#1-#2}}

\makeatletter
\newcommand{\noprelistbreak}{\smallskip\@nobreaktrue\nopagebreak} 
\makeatother

\begin{document}

%\title{Stability - asymptotics}
\title[Asymptotics of stable and unstable graphs]{Asymptotical results on the automorphism groups  of canonical double cover of graphs}
\author[B. Bychawski]{Bart\l omiej Bychawski }
\address{Faculty of Mathematics and Computer Science, Jagiellonian University, Krakow, Poland and Faculty of Mathematics, University of Cambridge, UK} 
\email{bartlomiej.bychawski@student.uj.edu.pl}
\author[A. Hujdurovi\'c]{Ademir Hujdurovi\'c }
\address{University of Primorska, Faculty of Mathematics, Natural Sciences and Information Technologies, Glagolja\v ska 8, 6000 Koper, Slovenia}
\email{ademir.hujdurovic@upr.si}
\author[\DJ. Mitrovi\'c]{\DJ or\dj e Mitrovi\'c }
\address{Department of Mathematics, University of Auckland, Private Bag 92019, Auckland 1142, New Zealand}
\email{dorde.mitrovic@auckland.ac.nz}
\author[G. Verret]{Gabriel Verret}\address{Department of Mathematics, University of Auckland, Private Bag 92019, Auckland 1142, New Zealand}
\email{g.verret@auckland.ac.nz}

%\date{\today}

\begin{abstract}
 The canonical double cover $BX$ of a graph $X$ is the direct product $X\times K_2$. A graph $X$ is stable if every automorphism of $BX$ is induced by automorphisms of $X$ and $K_2$, and unstable otherwise. 
%It is nontrivially unstable if it is unstable, connected, nonbipartite, and no two distinct vertices have the same set of neighbours.
We prove several asymptotic results on stable and unstable graphs. In particular, we show that the automorphism group of the canonical double cover of almost every graph has order $2$, and consequently that almost all graphs are stable. We also prove that the dominant sources of instability are having twins and being disconnected, in that order, and that graphs arising from a known construction are the next step in this hierarchy.
%that almost all unstable graph have twins, and that almost all nontrivially unstable graph arise from a known construction. Finally, we show that the automorphism group of the canonical double cover of almost every nontrivially unstable graph is isomorphic to the dihedral group of order $8$.

\end{abstract}

\maketitle

\begin{quotation}
\noindent {\em Keywords: canonical double cover, unstable graph, automorphism group}
\end{quotation}
\begin{quotation}
\noindent {\em AMS Subject Classification (2020): 05C25, 05C76}
\end{quotation}

%\tableofcontents

\section{Introduction}
All graphs in this paper are finite and simple. Given a graph $X$, its automorphism group is denoted by $\Aut(X)$. The  \textit{canonical bipartite double cover} (also called the \textit{bipartite double cover} or the \textit{Kronecker cover}) of $X$, denoted by $BX$, is the direct product $X \times K_2$ (where $K_2$ denotes the complete graph on two vertices). In other words, we have that 
\[V(BX)=V(X)\times \{0,1\}\] and \[E(BX)=\big\{\{(x,0),(y,1)\}: \{x,y\}\in E(X)\big\}.\] We denote by $\tau$ the \textit{canonical involution} on $V(BX)$ interchanging $(x,0)$ and $(x,1)$ for all $x\in V(X)$. We observe that $\tau\in \Aut(BX)$ and that $\Aut(X)\times \langle\tau\rangle \cong \Aut(X)\times \mathrm{S}_2$ canonically embeds into $\Aut(BX)$. The graph $X$ is \emph{stable} if $BX$ has no additional automorphism, that is, if 
\[\Aut(BX) = \Aut(X)\times \langle\tau\rangle.\]
Otherwise, $X$ is called \emph{unstable}.

Canonical double covers have proven to play an  important role in  algebraic graph theory and have been studied from a variety of perspectives. For example, their connections with symmetric $\{0,1\}$-matrices  were studied by Maru\v si\v c, Scapellato and Salvi ~\cite{maruvsivc1989characterization}, their regular embeddings by Nedela and \v Skoviera \cite{RegularEmbeddingsNedela}, their isomorphisms by Pacco and Scapellato \cite{Pacco} and  their symmetry properties in the context of $2$-fold covers by Feng, Kutnar, Malni\v c and Maru\v si\v c \cite{FengKutnar2fold}. 

Wilson gave several sufficient conditions for a graph to be unstable, which he then used to formulate conjectures characterising instability in four families of graphs: circulants, Rose Window graphs, toroidal graphs and generalized Petersen graphs~\cite{WilsonUnExpected}.  Wilson's conjectures were proven for generalized Petersen graphs by Qin, Xia, and Zhou~\cite{Qin21}, for toroidal graphs by Witte Morris~\cite{MorrisADAM}, and for Rose Window graphs by Ahanjideh, Kov\' acs, and Kutnar~\cite{AhanjidehKK}. 

A lot of work has been done on the stability of circulant graphs. Wilson's conjecture was shown to be false by  Qin, Xia and Zhou, who also proved that there are no nontrivially unstable arc-transitive circulants~\cite{QinXiaZhou}. It was proven by Fernandez and the second author that there are no nontrivially unstable circulants of odd order \cite{FernandezHujdurovicCirc}. Morris extended this result by proving that there are no nontrivially unstable Cayley graphs on abelian groups of odd order~\cite{morris2021automorphisms}. In \cite{HMMAuto}, several constructions of unstable circulants were presented, and unstable circulants of order twice a prime were classified, whereas in \cite{HMMAtMost7} unstable circulants with valency at most $7$ were classified. The connection between  stability of Cayley graphs and properties of Schur rings was studied in \cite{HKSchur}. Nevertheless, the full classification of unstable circulants remains an open problem. In ~\cite{BinzhouShashaZhishou_StableCayleyAsymptotics}, Xia, Zhang and Zheng proved that almost all Cayley graphs on abelian groups are stable. 

In this paper, we study the asymptotic behavior of the proportion of various properties related to stability of general graphs. Throughout the paper all asymptotic statements concern labeled graphs; in particular, when we say that almost all graphs in a family have a given property, we mean that the proportion of graphs  with that property among  all graphs in the family  with vertex-set $[n]\coloneqq \{1,\ldots,n\}$  tends to $1$ as $n\to\infty$. For a precise definition, see \cref{defn:AlmostAll}. 

We first show that instability is asymptotically rare. In fact, we prove the following stronger statement.

\begin{restatable}{thm}{zero}
\label{AlmostAllGraphsAreStable}
Almost all graphs $X$ satisfy $\Aut(BX)=\langle \tau\rangle$.
\end{restatable}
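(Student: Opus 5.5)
The argument is probabilistic, working with $X=G(n,1/2)$ on vertex set $[n]$. The idea is to identify a few properties that hold asymptotically almost surely and then check deterministically that they force $\Aut(BX)=\langle\tau\rangle$.

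The properties I would use are:
\begin{enumerate}[(P1)]
\item $X$ is connected and nonbipartite, so $BX$ is connected with bipartition $\{V\times\{0\},V\times\{1\}\}$, and every automorphism of $BX$ either preserves or swaps the two sides.
\item For any two distinct vertices $x\neq y$, the symmetric difference $N(x)\triangle N(y)$ has size at least $n/4$. In particular there are no twins, and neighbourhoods are far apart.
\item For any two distinct permutations $\sigma,\pi$ of $[n]$ with $(\sigma,\pi)\neq(\mathrm{id},\mathrm{id})$, the pair does not satisfy $\{x,y\}\in E \iff \{\sigma(x),\pi(y)\}\in E$ for all $x,y$.
\end{enumerate}

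Property (P3) is exactly the statement that there is no nontrivial side-preserving automorphism of $BX$. Such an automorphism has the form $(x,0)\mapsto(\sigma(x),0)$, $(y,1)\mapsto(\pi(y),1)$, and it preserves edges precisely when the condition in (P3) holds. Side-swapping automorphisms are the composition of $\tau$ with a side-preserving one. So (P1) together with (P3) gives $\Aut(BX)=\langle\tau\rangle$. In fact (P1) is only needed because, when $BX$ is disconnected, an automorphism could map a component to itself in a way not captured by the form above. Here I would note that any automorphism maps the bipartition classes of each component to bipartition classes, which is exactly why connectedness is used.

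The main step is therefore proving (P3) almost surely. This is the bipartite analogue of the Erdős–Rényi asymmetry argument: a union bound over pairs $(\sigma,\pi)$, with the sum organised by the number $m$ of points that are moved.

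Consider the edge indicator matrix $A$, which is symmetric with zero diagonal. The condition in (P3) says $A_{xy}=A_{\sigma(x)\pi(y)}$. This defines an action of the pair $(\sigma,\pi)$ on unordered pairs $\{x,y\}$, namely $\{x,y\}\mapsto\{\sigma x,\pi y\}$. Because $A$ is symmetric, this is only well-defined as a constraint system, but we can still argue directly. Suppose $\sigma$ moves $m\ge1$ points (the case $\sigma=\mathrm{id}$, $\pi\neq \mathrm{id}$ is symmetric). Choose a set $S$ of roughly $m/3$ moved points $x$ such that $S\cap\sigma(S)=\emptyset$. For each $x\in S$ and each $y$ outside a set of $O(m)$ exceptional vertices, the constraint $A_{xy}=A_{\sigma x,\pi y}$ relates two distinct pairs. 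We want about $mn/C$ such constraints that are independent, in the sense that they involve disjoint pairs. We greedily pick disjoint pairs of potential edges, each pair contributing an equality event of probability $1/2$, and there are $\Omega(mn)$ of them. This gives
\[
\Pr[(\sigma,\pi)\text{ works}]\le 2^{-cmn}.
\]
The number of pairs $(\sigma,\pi)$ in which $\sigma$ moves $m$ points is at most $n^{2m}\cdot n!$, because $\pi$ is unconstrained.

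This is the main obstacle: the factor $n!$ from $\pi$ is far too large. To handle it, first show that $\pi$ is essentially determined by $\sigma$. Fix $y$. The image $\pi(y)$ must satisfy $N(\pi y)=\sigma(N(y))$. By (P2) applied via $\sigma$, there is at most one vertex $z$ with $N(z)=\sigma(N(y))$, provided the neighbourhoods are distinct. More directly, the same union bound shows that almost surely, for every $\sigma$ moving $m$ points and every $y$, the set $\sigma(N(y))$ differs from $N(z)$ unless $z$ and $y$ satisfy $\sigma(y)=z$ up to small exceptions.

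So the refined plan is as follows. First show that almost surely no vertex $z$ has $N(z)$ within distance $n/8$ of $\sigma(N(y))$ for $z\neq$ the "expected" vertex. This forces $\pi=\sigma$ on all but $O(m)$ vertices. Then the count of pairs $(\sigma,\pi)$ with $m$ moved points is $n^{O(m)}$, and
\[
\sum_{m\ge1} n^{O(m)}2^{-cmn}\to0.
\]
Finally, the case $\pi=\sigma$ is exactly ordinary asymmetry of $X$, together with the constraint linking the two copies.

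Properties (P1) and (P2) themselves follow from standard Chernoff bounds and a union bound over $O(n^2)$ vertex pairs.
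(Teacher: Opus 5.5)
Your proposal is correct in substance but organised differently from the paper. The ``main obstacle'' you identify comes only from indexing the union bound by $|\supp\sigma|$ alone.

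\textbf{Comparison with the paper.} Both arguments use connectedness and nonbipartiteness to reduce to excluding nontrivial side-preserving pairs $(\sigma,\pi)=(g_1,g_2)$. The paper indexes by the joint support $s=|\supp\langle g_1,g_2\rangle|=|\supp\sigma\cup\supp\pi|$. There are at most $n^{2s}$ such pairs, so no factor $n!$ appears. The per-pair bound is Lemma~\ref{Lem:supportsize}: the neighbourhood of one vertex in each $\langle g_1,g_2\rangle$-orbit determines the neighbourhoods of all others in that orbit. It is applied with $m=2$ below the threshold $x=4\lceil(\log_2 n)^2\rceil$ and with $m=x$ above it. Your own estimate $2^{-cmn}$ would close the argument the same way without (P2): apply it to whichever of $\sigma,\pi$ moves at least half the joint support (the condition is symmetric in $\sigma,\pi$ because the adjacency matrix is).

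\textbf{Your route and the step to restate.} You instead use (P2) to obtain structure, and that step is muddled as written. An almost-sure claim quantified over all $\sigma$ cannot hold once $\sigma$ may have large support: for $y\neq z$ of equal degree, some $\sigma$ with $\sigma(y)\neq z$ maps $N(y)$ onto $N(z)$. For small support, no new union bound is needed, because the step is deterministic. Since $N(\pi y)=\sigma(N(y))$ and $\sigma(N(y))\triangle N(y)\subseteq\supp\sigma$, the following holds:
\begin{itemize}
\item if $|\supp\sigma|<n/4$, then (P2) forces $\pi=\id$;
\item then $N(\sigma x)=\pi(N(x))=N(x)$ forces $\sigma=\id$.
\end{itemize}
So every nontrivial pair has both coordinates moving at least $n/4$ points, and $(n!)^2\,2^{-cn^2/4}\to 0$ finishes.

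\textbf{What each approach buys.} Yours is a tidy two-step proof, with the byproduct that nontrivial pairs have linear support. The paper's orbit-counting bound instead gives explicit counts. These are reused in Lemma~\ref{Lem:supportsize5} and the later theorems at the scale $2^{\frac{n^2-5n+4}{2}}$. There (P2) is useless, since it fails for about $2^{\binom n2-\Theta(n)}$ graphs, far more than that scale.

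\textbf{Two small repairs.} First, drop ``distinct'' from (P3), since pairs with $\sigma=\pi\neq\id$ must also be excluded (as you later note). Second, in the per-pair estimate, exclude only $y\in\{x,\sigma x\}$ for each moved $x$, and use that each potential edge lies in at most four of the constraints $\{x,y\}\leftrightarrow\{\sigma x,\pi y\}$. Discarding $O(m)$ values of $y$ may leave none when $m$ is linear in $n$, which after the reduction is the only case left.
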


As a direct consequence, we have the following:

\begin{cor}
Almost all graphs are stable.
\end{cor}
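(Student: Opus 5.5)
Let $X$ be a uniformly random labeled graph on $[n]$, i.e.\ $G(n,1/2)$. The aim is to show that with probability $1-o(1)$ the only automorphisms of $BX$ are the identity and $\tau$. Two standard facts about $G(n,1/2)$ hold with probability $1-o(1)$: $X$ is connected and nonbipartite; and $X$ is \emph{strongly asymmetric in neighbourhoods}, meaning that for any two distinct $u,v$ the symmetric difference $N(u)\triangle N(v)$ has size at least $n/2-o(n)$. Both follow from Chernoff bounds and a union bound over pairs.

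The first step is to reduce to structured automorphisms. Since $X$ is connected and nonbipartite, $BX$ is connected and bipartite with parts $V_0=V(X)\times\{0\}$ and $V_1=V(X)\times\{1\}$. Hence every $\varphi\in\Aut(BX)$ either preserves both parts or swaps them. Replacing $\varphi$ by $\varphi\tau$ if necessary, we may assume $\varphi$ fixes $V_0$ and $V_1$ setwise. Then $\varphi$ is given by a pair of permutations $(\alpha,\beta)$ of $V(X)$ with $(x,0)\mapsto(\alpha(x),0)$ and $(y,1)\mapsto(\beta(y),1)$. The condition for $\varphi$ to be an automorphism is
\[
x\sim y \iff \alpha(x)\sim\beta(y) \quad \text{for all } x,y .
\]
Equivalently, $A=P_\alpha^{\top} A P_\beta$, where $A$ is the adjacency matrix. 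So it suffices to show that almost surely the only such pair is $\alpha=\beta=\id$.

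The second step is a first-moment count over pairs $(\alpha,\beta)\neq(\id,\id)$, following the classical Erd\H{o}s--R\'enyi proof that almost all graphs are asymmetric. Consider the induced permutation $\sigma$ on ordered pairs $(x,y)$, or more precisely on unordered edges, given by $\{x,y\}\mapsto\{\alpha(x),\beta(y)\}$. The condition forces the indicator of adjacency to be constant along certain cycles of the relation $A_{xy}=A_{\alpha(x)\beta(y)}$. If this relation, together with the symmetry $A_{xy}=A_{yx}$, identifies at least $m$ "independent" equalities among the $\binom n2$ edge variables, the probability is at most $2^{-m}$. Let $s$ be the number of points moved by $\alpha$ or by $\beta$, or the appropriate measure of "non-identity-ness". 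I would aim to show $m\ge c\,sn$ for some constant $c>0$. Summing over $(\alpha,\beta)$ with support size $s$ then gives at most $\binom{n}{s}^2 (s!)^2 2^{-csn}\le n^{4s}2^{-csn}$, which sums to $o(1)$ over $s\ge1$.

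The third step, and the main obstacle, is the lower bound on $m$ in the mixed case where $\alpha\neq\beta$, e.g.\ $\alpha=\id$ and $\beta$ nontrivial. In that subcase the condition reads $N(\beta(y))=N(y)$ for all $y$, i.e.\ twins, and the neighbourhood-separation fact kills it directly. In general I would handle the case split as follows. If $\beta(y)\ne\alpha(y)$ for some $y$, then comparing the neighbourhood of $(y,1)$ with the image $(\beta(y),1)$ gives
\[
N(\beta(y))=\alpha(N(y)).
\]
I would extract linearly many constraints by fixing a moved point $y$ and restricting to the many $x$ outside the supports of $\alpha$ and $\beta$. For those $x$ the condition reduces to $A_{xy}=A_{x\beta(y)}$, which gives about $n-2s$ independent constraints per moved $y$. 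Choosing a set of roughly $s/3$ moved points whose $\beta$-images avoid one another yields $m\ge (s/3)(n-2s)$ disjoint-variable constraints. For $s>n/3$ this bound degrades, so for large $s$ I would instead pick $x$ and $y$ from a set where $\alpha$ and $\beta$ have no short cycles, and argue as in the classical proof that cycles of length at least $2$ in $\sigma$ account for order $sn$ edges. This is the bookkeeping I expect to be most delicate.
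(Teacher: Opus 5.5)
Your route is the paper's route: reduce to connected nonbipartite $X$, so that every extra automorphism of $BX$ is a pair $(\alpha,\beta)\neq(\id,\id)$, then do a first-moment count over pairs split by $s=|\supp\langle\alpha,\beta\rangle|$. The small-support part of Step 3 works after a small adjustment. For $s\le n/2$, use the moved points of whichever of $\alpha,\beta$ has support at least $s/2$; if it is $\alpha$, use $A_{xy}=A_{\alpha(x)y}$ with $y$ fixed. Pick at least $s/6$ of these points with the pairs $\{y,\beta(y)\}$ pairwise disjoint, and let the other coordinate range over the $n-s$ common fixed points. This gives at least $\frac s6(n-s)$ disjoint constraints, and the union bound over at most $n^{3s}$ pairs converges.

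\textbf{The gap.} The large-support regime is left unproved. You call it bookkeeping, but it is exactly where an idea is needed. When $\langle\alpha,\beta\rangle$ has few or no fixed points, your constraints $A_{xy}=A_{x\beta(y)}$ disappear, and the fallback fails as stated. If $\alpha$ and $\beta$ are fixed-point-free involutions, there is no set on which they have ``no short cycles''. Also, $\{x,y\}\mapsto\{\alpha(x),\beta(y)\}$ is not well defined on unordered pairs when $\alpha\neq\beta$, so the classical edge-cycle count for asymmetric graphs does not transfer verbatim. Hence $m\ge csn$ for large $s$, and with it the convergence of your sum over all pairs, is not established.

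\textbf{The paper's fix.} The missing ingredient is Lemma~\ref{lem:Orbits} together with Lemma~\ref{Lem:supportsize}\pref{m/2Lemma}. Since $N(\beta(y))=\alpha(N(y))$ and $N(\alpha(y))=\beta(N(y))$, the neighbourhood of one representative determines the neighbourhood of every vertex in its orbit under $G=\langle\alpha,\beta\rangle$. Reveal neighbourhoods in the order: representative of $O_1$, rest of $O_1$, representative of $O_2$, and so on. Then the $j$-th non-representative sits in position at most $2j$, so its at least $n-2j$ remaining adjacencies are forced. This gives at most $2^{\binom n2-t(n-t-1)}$ graphs whenever $s\ge 2t$, with no fixed points needed. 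Moreover, for large support you do not need savings of order $sn$. Taking $t=2\lceil(\log_2 n)^2\rceil$ already gives a factor $2^{-\Omega(n(\log n)^2)}$, which beats the crude count $(n!)^2\le 2^{2n\log_2 n}$ of all pairs.

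\textbf{An alternative that completes your own plan.} Let $H=\langle(\alpha,\beta),\tau\rangle$ act on ordered pairs, with $\tau(x,y)=(y,x)$, and set $P=\{(x,y):x\neq y\}$.
The number of graphs with $(\alpha,\beta)\in\Aut(BX)$ is $2^k$, where $k$ is the number of $H$-orbits contained in $P$. Each such orbit has even size, since $\tau$ acts freely on $P$. A size-$2$ orbit requires $(\alpha(x),\beta(y))\in\{(x,y),(y,x)\}$. With $s_\alpha=|\supp\alpha|$ and $s_\beta=|\supp\beta|$, there are at most $(n-s_\alpha)(n-s_\beta)-(n-s)+s_\alpha$ such elements of $P$. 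This yields $m\ge s(n-4)/8$ for every $s$, which is exactly the uniform bound your Step 2 needs.
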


It is not hard to see that graphs with \textit{twins} (nonadjacent distinct vertices with the same neighbourhoods) and disconnected graphs are unstable. We will prove that these are the dominant sources of instability, in that order.

\begin{restatable}{thm}{second}
\label{thm:unstable have twins}
Almost all unstable graphs have twins.
%\begin{enumerate}
%    \item ,\label{AlmostAllUnstableTwins}
%    \item are connected and\label{AlmostAllUnstableConnected}
%    \item are nonbipartite.\label{AlmostAllUnstableNonBipartite}
%\end{enumerate}
\end{restatable}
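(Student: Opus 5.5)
We compare two counts on the vertex set $[n]$. Let $T_n$ be the number of labeled graphs with twins and $U_n$ the number of unstable graphs. Since every graph with twins is unstable, it suffices to show that the number of unstable graphs without twins is $o(T_n)$.

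\emph{Lower bound on $T_n$.} Count graphs in which vertices $n-1$ and $n$ are twins. Choose an arbitrary graph on $[n-1]$ and make $n$ nonadjacent to $n-1$ with the same neighbourhood. This gives $2^{\binom{n-1}{2}}$ graphs, so
\[T_n\ge 2^{\binom{n-1}{2}}=2^{\binom n2-(n-1)}.\]
Thus it suffices to show that unstable twin-free graphs number at most $2^{\binom n2-(1+\varepsilon)n}$ for some fixed $\varepsilon>0$, or at least $o(2^{\binom n2-n})$.

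\emph{Upper bound for twin-free unstable graphs.} Let $X$ be twin-free and unstable, and let $\varphi\in\Aut(BX)\setminus(\Aut(X)\times\langle\tau\rangle)$.

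First, if $X$ is disconnected, the count is already small. A graph whose smallest component has $k$ vertices is chosen in at most $\binom nk 2^{\binom k2+\binom{n-k}2}$ ways. For $k=1$ this is $n2^{\binom{n-1}2}$, which is not small enough. However, an isolated vertex together with twin-freeness forces at most one isolated vertex, so this case needs separate treatment. I would instead use the standard fact that almost all graphs are connected with minimum degree large, and handle graphs of small minimum degree directly.

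In the connected nonbipartite case, $BX$ is connected, and $\varphi$ either preserves or swaps the two colour classes. Composing with $\tau$, we may assume $\varphi$ preserves them. Then $\varphi=(\alpha,\beta)$ with $\alpha,\beta\in\Sym(n)$ and $\alpha\neq\beta$, such that $x\sim y$ if and only if $\alpha(x)\sim\beta(y)$. Twin-freeness is what prevents trivial choices here, since twins are exactly what allow $\alpha$ to differ from $\beta$ by a transposition of twins.

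For a fixed pair $(\alpha,\beta)$, consider the permutation on unordered-ish pairs: the condition $x\sim y \iff \alpha(x)\sim\beta(y)$ (and symmetrically $y\sim x\iff\alpha(y)\sim\beta(x)$) partitions the ordered pairs into orbits on which adjacency is constant. So the number of graphs is $2^{c}$, where $c$ is the number of orbits on edge-slots.

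Following the method of Pólya counting and the classical proof that almost all graphs are asymmetric, I would show the following. If $\alpha,\beta$ move (after normalizing) a set of $m$ points, or more precisely if $\beta^{-1}\alpha$ has support of size $m\ge 2$, then $c\le\binom n2-\Omega(mn)$. I would also show that the case $m=2$, where $\alpha^{-1}\beta$ is a transposition $(u\,v)$, forces $u,v$ to be twins or adjacent with the same closed neighbourhood. The latter yields a constraint on about $2(n-2)$ slots, so it contributes roughly $2^{\binom n2-2n+O(\log n)}$ including the $n^2$ choices.

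Summing over all pairs, with at most $n!^2$ pairs but each orbit-count loss growing linearly in $n\cdot m$, I would bound the total by $\sum_m n^{O(m)}2^{\binom n2-c'mn}=o(2^{\binom n2-n})$.

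\emph{Main obstacle.} The hardest part is the orbit-counting bound: showing that every pair $(\alpha,\beta)$ with $\alpha\ne\beta$, other than $\alpha=\beta$ composed with twin swaps, imposes at least roughly $(1+\varepsilon)n$ independent constraints. The combined action $(x,y)\mapsto(\alpha x,\beta y)$ must be analyzed on ordered pairs, and the symmetry of adjacency ties each orbit to its transpose. I would first handle $\alpha=\beta\neq\id$, which is the classical asymmetry bound with loss about $mn/2$ but $m\ge2$, then the "twisted" case $\alpha\neq\beta$. In the twisted case each $x$ with $\alpha(x)\ne\beta(x)$ links rows $x$ and $\alpha$/$\beta$ images, giving about $n/2$ constraints per moved point. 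Small-support cases with $m=2,3$ would be checked by hand against the twin-free hypothesis.
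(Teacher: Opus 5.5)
Your plan cannot be completed as set up, because its target is false. With only $T_n\ge 2^{\binom{n-1}{2}}$, you need unstable twin-free graphs to number $o(2^{\binom{n}{2}-n})$. Now choose one vertex to be isolated and put any connected twin-free graph on the remaining $n-1$ vertices. The result is disconnected, hence unstable, and twin-free, since exactly one vertex has empty neighbourhood. Distinct choices give distinct graphs, so there are $(1-o(1))\,n\,2^{\binom{n-1}{2}}=(2-o(1))\,n\,2^{\binom{n}{2}-n}$ of them, more than your lower bound for $T_n$. You noticed that $k=1$ is ``not small enough'', but the minimum-degree patch cannot help: these graphs genuinely exist in that number.

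The missing idea is on the other side of the comparison: you need the sharp count $T_n\ge(1-o(1))\binom{n}{2}2^{\binom{n-1}{2}}$. The paper gets it from the first two Bonferroni inequalities. Each event ``$i,j$ are twins'' contains exactly $2^{\binom{n-1}{2}}$ graphs, and each of the at most $n^4$ pairwise intersections contains $2^{\binom{n-2}{2}}$. Alternatively, a graph in which $i,j$ are twins and which stays twin-free after deleting $j$ has $\{i,j\}$ as its only twin pair, so these families are disjoint over the $\binom{n}{2}$ pairs. With this bound, your own component estimate shows that all disconnected graphs, about $n2^{\binom{n-1}{2}}$ of them, are $o(T_n)$. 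It then only remains to show that connected twin-free unstable graphs number $o(2^{\binom{n-1}{2}})$.

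For that remaining part your plan matches the paper's in spirit, but the bookkeeping is wrong as written. You cannot sum over $m=|\supp(\beta^{-1}\alpha)|$. Once $\beta^{-1}\alpha$ is fixed, $\beta$ is still arbitrary, so each $m$ has at least $n!$ pairs, not $n^{O(m)}$. No normalization is available either, since $(\alpha,\alpha)$ need not lie in $\Aut(BX)$. The correct parameter is $s=|\supp\langle\alpha,\beta\rangle|$:
\begin{itemize}
\item there are at most $\binom{n}{s}(s!)^2\le n^{2s}$ pairs;
\item from $N(\alpha x)=\beta N(x)$ and $N(\beta x)=\alpha N(x)$, all neighbourhoods in an orbit of $\langle\alpha,\beta\rangle$ are determined by one of them, giving at most $2^{\binom{n}{2}-t(n-t-1)}$ graphs per pair, with $t=\lfloor s/2\rfloor$;
\item twin-freeness forces $s\ge 4$, because $s\le 3$ with $\alpha\neq\beta$ always produces twins (Lemma~\ref{lem:split}).
\end{itemize}
So the hand check belongs to $s=2,3$, not $m=2,3$. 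The case $s=4$ then contributes $O(n^8 2^{\binom{n}{2}-2n})$, and larger $s$ follows as in the asymmetry argument.

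Finally, writing $\varphi$ as $(\alpha,\beta)$ requires $BX$ to be connected, so you must also discard connected bipartite $X$. This is harmless, since there are at most $2^{n^2/4+n}$ bipartite graphs.
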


%As a consequence, almost all connected unstable graphs still have twins.
%We move further down the hierarchy and prove that, among twin-free graphs, being disconnected is the dominant source of unstability.

\begin{restatable}{thm}{third}
\label{thm:allmost all twin-free are disconnected}
Almost all unstable twin-free  graphs are disconnected.
\end{restatable}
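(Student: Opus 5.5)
The plan is to compare two counts. The first is a lower bound on the number of disconnected unstable twin-free graphs on $[n]$. The second is an upper bound on the number of connected unstable twin-free graphs on $[n]$. The goal is to show the second is $o$ of the first.

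\emph{Lower bound.} Every disconnected graph is unstable: if $X=X_1\cup X_2$ with no edges between $X_1$ and $X_2$, then applying $\tau$ on $V(X_1)\times\{0,1\}$ only is an automorphism of $BX$ that does not lie in $\Aut(X)\times\langle\tau\rangle$. Now take a vertex $v\in[n]$ and a graph $Y$ on $[n]\setminus\{v\}$ that is twin-free and has no isolated vertex. The graph $Y$ together with the isolated vertex $v$ is disconnected and twin-free. Almost all graphs are connected and twin-free (the latter by a standard first-moment estimate: a fixed pair of vertices are twins with probability $2^{-(n-2)}$). Hence the number of disconnected unstable twin-free graphs is at least $(1-o(1))\,n\,2^{\binom{n-1}{2}}=(1-o(1))\,n\,2^{\binom n2-n+1}$.

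\emph{Upper bound.} Let $X$ be connected, twin-free and unstable. Connected bipartite graphs number at most $2^{n}2^{n^2/4}$, which is negligible, so we may assume $X$ is nonbipartite. Then $BX$ is connected with a unique bipartition $\{V\times\{0\},V\times\{1\}\}$. Take an automorphism outside $\Aut(X)\times\langle\tau\rangle$ and, after composing with $\tau$ if necessary, assume it fixes each part. It then has the form $(x,0)\mapsto(\alpha(x),0)$, $(x,1)\mapsto(\beta(x),1)$, where $\alpha,\beta\in\Sym(n)$, $\alpha\neq\beta$, and
\[
xy\in E(X)\iff \alpha(x)\beta(y)\in E(X) \quad\text{for all } x,y.
\]
Equivalently, $P_\alpha A P_\beta^{T}=A$ for the adjacency matrix $A$. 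For a fixed pair $(\alpha,\beta)$, the graphs satisfying this are exactly those that are constant on the orbits of the group $\langle(\alpha,\beta)\rangle$ acting on ordered pairs, subject to the symmetry $A=A^T$ and zero diagonal. So their number is $2^{\binom n2-c(\alpha,\beta)}$, where $c(\alpha,\beta)$ counts the lost degrees of freedom.

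Twin-freeness removes the cheapest pairs. If $\alpha=\mathrm{id}$, then $N(y)=N(\beta(y))$ for all $y$, so $y$ and $\beta(y)$ are twins. More generally, one should check that any pair for which $c(\alpha,\beta)\le n+O(\log n)$ forces twins. Heuristically, the cheapest such pairs have $\alpha$ and $\beta$ differing on very few points and essentially encode a twin relation; a transposition $\beta$ costs about $n-2$ bits. I will parametrize the remaining pairs by $m=|\supp(\alpha)\cup\supp(\beta)|$, which gives at most $n^{2m}$ pairs. The key estimate to aim for is
\[
c(\alpha,\beta)\ge \tfrac{m}{2}\,n\,(1-o(1)) \quad\text{and}\quad c(\alpha,\beta)\ge 2n-O(1)
\]
for every non-twin-forcing pair with $\alpha\ne\beta$. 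The next case up, the known construction, should cost about $2n$ bits. Summing then gives
\[
\sum_{m\ge 2} n^{2m}\,2^{\binom n2-\max(2n-O(1),\,mn/3)}=O\!\bigl(n^{O(1)}2^{\binom n2-2n}\bigr)=o\!\bigl(n\,2^{\binom n2-n}\bigr),
\]
where the exponent $mn/3$ deliberately leaves slack in the $\tfrac m2 n$ bound. Large supports ($m$ linear in $n$) should be handled by the orbit-counting bound already used for \cref{AlmostAllGraphsAreStable}, whose proof I expect gives $c\ge \Omega(mn)$.

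\emph{Main obstacle.} The hard step is the structural lemma for small supports. It must show that if $c(\alpha,\beta)<2n-O(1)$, then $X$ necessarily has twins. I would first analyze how the relation $P_\alpha AP_\beta^T=A$ acts on rows indexed by $\supp(\alpha)\cup\supp(\beta)$, counting forced equalities row by row. A row $x$ with $\alpha(x)\ne x$ or $\beta(x)\ne x$ ties $A_x$ to a different, permuted row, which costs about $n$ bits unless this reduces to $N(x)=N(x')$. I would then show that two independent such ties cost about $2n$ bits.
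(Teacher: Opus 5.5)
Your architecture is the paper's. The lower bound attaches an isolated vertex to a connected twin-free graph on $n-1$ vertices; your variant, with the isolated vertex ranging over $[n]$ and $Y$ required to have no isolated vertex, is correct and even gains a factor $n$. The upper bound is a union bound, after discarding bipartite graphs, over pairs $(\alpha,\beta)$ with $\alpha\neq\beta$ and $(\alpha,\beta)\in\Aut(BX)$. The gap is that this upper bound is never established. Both lower bounds on $c(\alpha,\beta)$ are stated only as estimates ``to aim for'', and the structural lemma you call the main obstacle is left open. Everything else is routine, so the missing step is the entire content of the upper bound.

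The idea that closes it is the orbit-representative count of Lemma~\ref{Lem:supportsize}, which is the precise form of your row-tying heuristic. Let $G=\langle\alpha,\beta\rangle$. By Lemma~\ref{lem:Orbits}, the orbits of $\langle(\alpha,\beta),\tau\rangle$ on $V(BX)$ are the sets $O\times\{0,1\}$ with $O$ a $G$-orbit. Hence $N_X(v)$ for a single $v\in O$ determines $N_X(u)$ for every $u\in O$. List the vertices orbit by orbit, representative first. A non-representative vertex in position $i$ then forfeits all $n-i$ of its free bits, and the $j$-th non-representative sits in position at most $2j$. This gives $c(\alpha,\beta)\ge\frac{m'}{2}\bigl(n-\frac{m'}{2}-1\bigr)$ for every even $m'\le m$.

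In particular, if $m\ge 3$ there are at least two non-representatives, so $c(\alpha,\beta)\ge 2n-6$ with no hypothesis on $X$ at all. If $m=2$, then $\{\alpha,\beta\}=\{\id,(a\,b)\}$, which yields twins by your own $\alpha=\id$ argument (and its mirror for $\beta=\id$). So the ``hard'' lemma reduces to a two-line observation. The paper proves the stronger statement $|\supp(G)|\ge 4$ for twin-free $X$ (Lemma~\ref{lem:split}(\ref{lem:support at least 4})), because Theorem~\ref{thm:almost all nontrivially are XAB} needs it; for the present theorem $m\ge 3$ suffices.

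There is also a problem with your displayed sum using $\max(2n-O(1),mn/3)$, as you half-anticipate: it is not valid for $m$ linear in $n$, because the $1-o(1)$ in your key estimate is not uniform in $m$. Take $\alpha=\prod_i(a^i_1\,a^i_2)$ and $\beta=\prod_i(b^i_1\,b^i_2)$ over $n/4$ disjoint quadruples. This gives $m=n$ and $c(\alpha,\beta)=\frac{5n^2}{16}-\frac n4<\frac{n^2}{3}$, and generic graphs in that family are twin-free. Use the $m'$-bound instead and split at $m\approx(\log_2 n)^2$, as in the proof of Theorem~\ref{AlmostAllGraphsAreStable}.
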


Moving further down the asymptotic hierarchy, we consider twin-free connected unstable graphs and show that they almost all arise from
the following simple construction, which was first defined in \cite{SomeConditionsImplyingStability}.

\begin{construction}\label{constructionX(A,B)}
Let $X$ be a graph and let $A$ and $B$ be subsets of $V(X)$. Let $X(A,B)$ be the graph obtained from $X$ by adding four new vertices $a_1,a_2,b_1,b_2$ and two new edges $\{a_1,b_1\},\{a_2,b_2\}$ to $X$, and then joining $a_1$ and $a_2$ with every vertex in $A$ and $b_1$ and $b_2$ with every vertex in $B$.
\end{construction}

\begin{figure}[!htbp]
    \centering
    \includegraphics{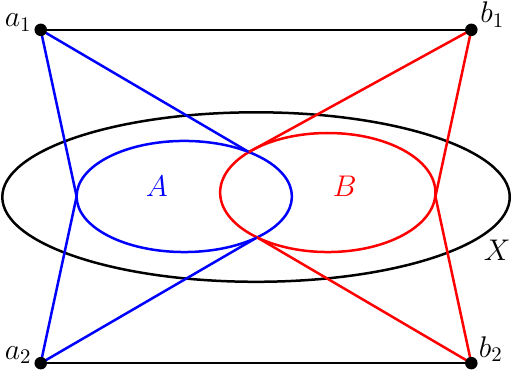}
    \caption{$X(A,B)$.}
    \label{fig:construction}
\end{figure}

%We prove that this construction accounts for almost all nontrivially unstable graphs. 

\begin{restatable}{thm}{fourth}
\label{thm:almost all nontrivially are XAB}
Almost all unstable  twin-free connected  graphs arise via Construction~\ref{constructionX(A,B)}.
\end{restatable}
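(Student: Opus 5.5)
We count labeled graphs on $[n]$ and compare two quantities: a lower bound on the number of unstable twin-free connected graphs of the form $X(A,B)$, and an upper bound on the number of unstable twin-free connected graphs \emph{not} of this form. Showing the second is $o$ of the first proves the theorem.

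\emph{Lower bound.} First we check (or cite \cite{SomeConditionsImplyingStability}) that $X(A,B)$ is always unstable. The map on $BX(A,B)$ that swaps $(b_1,0)\leftrightarrow(b_2,0)$ and fixes every other vertex is an automorphism. Indeed, $(b_1,0)$ and $(b_2,0)$ both have neighbourhood $B\times\{1\}$, except that $(b_1,0)\sim(a_1,1)$ and $(b_2,0)\sim(a_2,1)$, and $(a_1,1),(a_2,1)$ have identical neighbourhoods apart from these two edges. So we should also swap $(a_1,1)\leftrightarrow(a_2,1)$. This map is not induced by $\Aut(X(A,B))\times\langle\tau\rangle$, because it fixes $(a_1,0)$ but moves $(a_1,1)$. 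Next we count the choices: $n-4$ vertices, a graph $X$ on them, subsets $A$ and $B$, and labelings of the four new vertices. This gives roughly $\binom{n}{4}\cdot 2^{\binom{n-4}{2}}\cdot 2^{2(n-4)}$ graphs. By \cref{AlmostAllGraphsAreStable} and standard facts, for almost all choices $X(A,B)$ is twin-free and connected, and the construction can be recovered from the graph essentially uniquely (bounded overcounting). The lower bound is therefore of order $n^4 2^{\binom{n}{2}-2n}$, up to constants.

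\emph{Upper bound.} Let $Y$ be unstable, twin-free and connected, and let $\varphi\in\Aut(BY)\setminus(\Aut(Y)\times\langle\tau\rangle)$. Since $Y$ is connected and twin-free, we may assume, after composing with $\tau$, that $\varphi$ preserves the bipartition. Then $\varphi$ is a pair of permutations $(\alpha,\beta)$ of $V(Y)$ with $x\sim y \iff \alpha(x)\sim\beta(y)$, and $\alpha\neq\beta$. Such a pair is a ``two-fold automorphism''. Following the standard Erdős–Rényi style argument used for \cref{AlmostAllGraphsAreStable}, we bound, for a fixed pair $(\alpha,\beta)$, the number of graphs it preserves by $2^{c(\alpha,\beta)}$. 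Here $c$ counts the orbits of the induced action on unordered pairs, i.e.\ the orbits of the involution-twisted action $\{x,y\}\mapsto\{\alpha x,\beta y\}\cup\{\beta x,\alpha y\}$ compatible with symmetry. We then split according to the support size $m=|\{x:\alpha(x)\neq x\}\cup\{x:\beta(x)\neq x\}|$.

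The key computation is that a nontrivial pair with small support fixes at most about $2^{\binom{n}{2}-m(n-m)/2+O(m^2)}$ graphs. Twin-freeness and connectedness then force $m\ge 4$. If $m\le 3$, the constraints give twins, or make $\alpha=\beta$ once the fixed-point part is used. Moreover, when $m=4$ the only configuration attaining exponent $\binom{n}{2}-2n+O(1)$ is exactly the structure of Construction~\ref{constructionX(A,B)}. In that case $\alpha$ and $\beta$ are the transpositions $(a_1a_2)$ and $(b_1b_2)$ in the two layers, with edges $a_ib_i$ and no other edges among the four vertices. Summing over all other pairs with $m=4$, and over all pairs with $m\ge5$ (weighted by $n^{m}$ choices), we should get $o(n^4 2^{\binom{n}{2}-2n})$. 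For large $m$ we use the standard bound of roughly $m(n-m)/4$ lost orbits, as in the proof of \cref{AlmostAllGraphsAreStable}.

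The main obstacle is the $m=4$ (and borderline $m=5,6$) case analysis. There we must enumerate the possible cycle structures of $(\alpha,\beta)$ on the support. For each, we must show either that the exponent loss exceeds $2n$ by a linear term, or that twin-freeness or connectedness is violated, or that the graph is isomorphic to some $X(A,B)$. I would organize this by the orbits of $\langle\alpha,\beta\rangle$ restricted to pairs consisting of a support vertex and a fixed vertex. Each such pair not in a singleton orbit costs a factor of $2$, so loss of exactly $2n$ requires the support to split into two pairs, each moved by only one of $\alpha,\beta$.
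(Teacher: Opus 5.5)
\textbf{There is a genuine gap in the support-$4$ step.} Your overall strategy is the paper's. You get a lower bound from the construction. For the upper bound, you write a layer-preserving automorphism as $(\alpha,\beta)$ with $\alpha\neq\beta$, get $|\supp\langle\alpha,\beta\rangle|\ge 4$ from twin-freeness, kill support $\ge 5$ by counting (Lemma~\ref{Lem:supportsize5}), and show that support exactly $4$ yields $X(A,B)$ (Corollary~\ref{COR: if suppG has order 4 then Y=X(A,B)}). That last step is the one you leave as the ``main obstacle'', and the justification you sketch for it is wrong.

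Counting orbits on pairs (support vertex, fixed vertex) only shows that a loss of $2n+O(1)$ forces $G$ to have two orbits $\{a_1,a_2\},\{b_1,b_2\}$ of size $2$. It cannot force each orbit to be moved by only one of $\alpha,\beta$. Two examples:
\begin{itemize}
\item $(\alpha,\beta)=((a_1a_2)(b_1b_2),\id)$ is preserved by every graph in which $a_1,a_2$ and $b_1,b_2$ are twin pairs.
\item $((a_1a_2)(b_1b_2),(a_1a_2))$ is preserved by every graph with $(a_1a_2)\in\Aut(Y)$ and $b_1,b_2$ twins.
\end{itemize}
Each family has $2^{\binom n2-2n+O(1)}$ members per support. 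Over the $\Theta(n^4)$ supports they are of the same order as your lower bound, and much larger than the paper's. So no counting argument removes them; they must be excluded by the hypothesis that $Y$ is twin-free.

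\textbf{How to close it.} The first configuration is ruled out by Lemma~\ref{lem:split}(4). For the second, apply $x\sim y\iff x^{\alpha}\sim y^{\beta}$:
\begin{itemize}
\item to $(b_1,a_1)$, $(b_1,a_2)$, $(a_1,b_1)$ and $(a_1,b_2)$, which shows all four adjacencies $a_ib_j$ coincide;
\item to $(b_1,b_1)$, which gives $b_1\not\sim b_2$;
\item to $(b_1,w)$ with $w$ fixed, which gives agreement on the fixed vertices.
\end{itemize}
Hence $N_Y(b_1)=N_Y(b_2)$. The paper's Subcase 2.2 reaches this via $(\alpha\beta,\beta\alpha)\in\Aut(BY)$. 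Only $\{\alpha,\beta\}=\{(a_1a_2),(b_1b_2)\}$ survives, and there Lemma~\ref{lem:split}(3) together with twin-freeness yields exactly $X(A,B)$. The single $4$-orbit case can indeed be counted away (loss $3n+O(1)$), as you intend; the paper instead shows it gives $X(A,A)$.

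\textbf{Smaller points.}
\begin{itemize}
\item Reducing to a layer-preserving $\varphi$ needs $Y$ nonbipartite, so that $BY$ is connected; twin-freeness is irrelevant here. You must first discard bipartite graphs using the $2^{n^2/4+n}$ bound.
\item Your large-support estimate ``roughly $m(n-m)/4$ lost orbits'' degenerates as $m\to n$, which is exactly where there are $(n!)^2$ pairs. You need a loss of order $m'n$ with $m'\approx\log n$, which Lemma~\ref{Lem:supportsize}(2) with $m=x$ provides.
\item The $n^4$ factor and the ``bounded overcounting'' claim in your lower bound are unproved but also unnecessary. Your upper bound is in fact $o(2^{\binom n2-2n})$. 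It therefore suffices, as in Proposition~\ref{ConstructionBounds}, to fix the four new vertices as $n-3,\dots,n$, where $(X,A,B)\mapsto Y$ is trivially injective.
\item Twin-freeness and connectivity of $X(A,B)$ come from Lemma~\ref{counting} and Proposition~\ref{X(AB)Unstable}, not from Theorem~\ref{AlmostAllGraphsAreStable}.
\end{itemize}
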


%{\color{red}Note that each graph arising from Construction~\ref{constructionX(A,B)} admits $(a_1,a_2)(b_1, b_2)$  as an automorphism (see also \Cref{X(AB)Unstable}). We obtain the following corollary of Theorem~\ref{thm:almost all nontrivially are XAB}.  

%\begin{cor}
%    Almost all unstable twin-free connected graphs admit a double transposition as an automorphism.
%\end{cor}}

Finally, we determine the typical automorphism group of the canonical double cover of the graphs in \Cref{thm:almost all nontrivially are XAB}. Let $\DihGrp_4$ denote the dihedral group of order $8$.

\begin{restatable}{thm}{fifth}
\label{cor:typical-index-two-nontrivial}
Almost all unstable  twin-free connected  graphs $Y$ that arise via Construction~\ref{constructionX(A,B)} satisfy
$$\Aut(BY)=\left\langle((a_1,0),(a_2,0))((b_1,1),(b_2,1)),\tau\right\rangle\cong \DihGrp_4$$
for some distinct vertices $a_1,a_2,b_1,b_2$ of $Y$.
\end{restatable}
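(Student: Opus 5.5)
Since $Y=X(A,B)$ carries the automorphism $\sigma=(a_1,a_2)(b_1,b_2)$, the involution $\varphi=((a_1,0),(a_2,0))((b_1,1),(b_2,1))$ of $BY$ is an automorphism. To see this, note that in $BY$ the only edges through $(a_i,0)$ go to $(b_i,1)$ and to $(A,1)$, and the only edges through $(b_i,1)$ go to $(a_i,0)$ and to $(B,0)$; swapping both pairs simultaneously therefore preserves every edge. Next, $\varphi\tau$ sends $(a_1,0)\mapsto(a_2,1)$ and $(a_2,1)\mapsto(a_2,0)\mapsto(a_1,1)$, so $(\varphi\tau)^2\neq 1$. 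A direct check gives order $4$, hence $\langle\varphi,\tau\rangle\cong \DihGrp_4$ with $\langle\sigma\rangle\times\langle\tau\rangle$ as a Klein subgroup.

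The work is to show that for almost all such $Y$ there is nothing more. I would make the counting model explicit: the graphs of Construction~\ref{constructionX(A,B)} on vertex set $[n]$ are determined by a choice of four labels, a graph $X$ on the remaining $n-4$ labels, and subsets $A,B$. The total count is therefore roughly $n^4 2^{\binom{n-4}{2}}4^{n-4}$. I would restrict to the generic subfamily where
\begin{itemize}
\item $\Aut(BX)=\langle\tau\rangle$, which holds for almost all $X$ by \cref{AlmostAllGraphsAreStable}, and is robust under changing $O(1)$ vertices;
\item $A,B$ are ``random'' sets of size about $n/2$ with $A\neq B$;
\item every vertex of $X$ has degree near $n/2$.
\end{itemize}
The last condition guarantees that $a_i,b_i$, of degree about $n/4$, are distinguishable by degree. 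Such $Y$ are automatically twin-free, connected, nonbipartite and unstable. By Theorem~\ref{thm:almost all nontrivially are XAB}, restricting attention to them loses nothing in proportion, provided the normalisation is taken within the construction family.

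For the key step, let $\alpha\in\Aut(BY)$. By degrees, $\alpha$ preserves the set $S=\{a_1,a_2,b_1,b_2\}\times\{0,1\}$ of low-degree vertices. Hence $\alpha$ restricts to an automorphism of $B(Y)-S=BX$, where I use that $BX$ is the induced subgraph. That restriction lies in $\langle\tau\rangle$, so after composing with $\tau$ we may assume $\alpha$ fixes $V(BX)$ pointwise. Then $\alpha$ must map each $(s,\epsilon)\in S$ to a vertex of $S$ with the same neighbourhood in $BX$. Those neighbourhoods are $(A,1)$ for the $(a_i,0)$, $(A,0)$ for the $(a_i,1)$, $(B,1)$ for the $(b_i,0)$, and $(B,0)$ for the $(b_i,1)$. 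Since $A\neq B$, $\alpha$ only permutes within each pair $\{(a_1,\epsilon),(a_2,\epsilon)\}$ and $\{(b_1,\epsilon),(b_2,\epsilon)\}$. The edges $(a_i,\epsilon)\sim(b_i,1-\epsilon)$ force the swap on the $a$-pair in layer $\epsilon$ to match the swap on the $b$-pair in layer $1-\epsilon$. This leaves exactly four elements, generated by $\varphi$ and $\tau\varphi\tau$, so $|\Aut(BY)|\le 2\cdot 4=8$, and equality gives the claim.

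The main obstacle is the degree-separation step. One needs $A$ and $B$ to be typical, which happens for all but an exponentially small proportion of $(A,B)$. One also needs the graphs excluded by the genericity conditions to be negligible within the family of unstable twin-free connected graphs arising from the construction. That family should have size comparable to the full construction count, since a typical $(X,A,B)$ yields such a graph with at most $O(n^4)$ representations each. So all excluded classes, including atypical $X$ from \cref{AlmostAllGraphsAreStable} and atypical $A,B$, form a $o(1)$ fraction, and I would verify this overcounting bound carefully.
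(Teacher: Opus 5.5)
\paragraph{The gap.} Your key step, that every $\alpha\in\Aut(BY)$ preserves $S=\{a_1,a_2,b_1,b_2\}\times\{0,1\}$ ``by degrees'', fails. In $Y=X(A,B)$ one has $\deg_Y(a_i)=|A|+1$ and $\deg_Y(b_i)=|B|+1$. The labelled count weights all triples $(X,A,B)$ equally, so for a typical triple $|A|,|B|=\frac{n}{2}\pm O(\sqrt n)$. The new vertices therefore have degree about $n/2$, not $n/4$. A vertex $v$ of $X$ has $\deg_Y(v)=\deg_X(v)+2[v\in A]+2[v\in B]$, which ranges over $\frac n2\pm O(\sqrt{n\log n})$, so typically on the order of $\sqrt n$ vertices of $X$ have the same degree as $a_1$. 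Your genericity conditions are mutually inconsistent. You also cannot rescue them by demanding $|A|,|B|\approx n/4$: such pairs form an exponentially small fraction of all $(A,B)$, so that subfamily is not almost all of the construction family. Everything after this step needs $S$ to be $\Aut(BY)$-invariant, namely restricting $\alpha$ to $BX$ and invoking $\Aut(BX)=\langle\tau\rangle$. So the argument breaks exactly here.

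\paragraph{How to repair it.} Use a different invariant. Automorphisms of $BY$ preserve $d(x,y)=|N_{BY}(x)\triangle N_{BY}(y)|$. We have $d=2$ for the pairs $(a_1,\epsilon),(a_2,\epsilon)$ and $(b_1,\epsilon),(b_2,\epsilon)$. For all but a proportion $n^{O(1)}2^{-n}$ of triples, every other pair has $d\ge 3$:
\begin{itemize}
\item for pairs in different layers, $d$ is a sum of degrees;
\item for pairs in the same layer, it suffices that $A,B\neq\varnothing$, $A\ne B$, and that the sets $N_X(u)$ ($u\in V(X)$), $A$, $B$ pairwise have symmetric difference at least $3$.
\end{itemize}
Then $S$ is exactly the set of vertices having a partner at $d$-distance $2$, hence invariant, and your local analysis goes through as written. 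You must also make the overcounting quantitative. The bare $o(1)$ of \cref{AlmostAllGraphsAreStable} does not beat your factor $n^4$ of representations. Its proof, however, gives an exceptional proportion $2^{-n+O((\log n)^3)}$, which does.

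\paragraph{Comparison with the paper.} The paper avoids genericity and overcounting altogether. Take any $(g_1,g_2)\in\Aut(BY)$ with $g_1\ne g_2$ outside the kernel $K$. Either $|\supp\langle g_1,g_2\rangle|\ge5$, which is negligible by Lemma~\ref{Lem:supportsize5}. Or the support has size $4$, and then the case analysis of Corollary~\ref{COR: if suppG has order 4 then Y=X(A,B)} yields two distinct double transpositions in $\Aut(Y)$, which is negligible by Lemma~\ref{lem:TwoDoubleTranspositions}. Both counts are $o(2^{(n^2-5n+4)/2})$, matching the fixed-label lower bound of Proposition~\ref{ConstructionBounds}.
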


Some authors (for example Wilson in ~\cite{WilsonUnExpected}) define a graph to be \textit{nontrivially unstable} if it is unstable but twin-free, connected and nonbipartite. Note that nonbipartite is slightly different from the other two conditions in this definition: graphs with twins and disconnected graphs  are necessarily unstable, whereas a bipartite graph may still be stable. Moreover, we have already noted that, asymptotically speaking, graphs with twins and disconnected graphs are the dominant sources of instability. Being bipartite is much further down the list, see for example~\Cref{counting}. It is also lower than graphs arising from Construction~\ref{constructionX(A,B)}. For this reason, \Cref{thm:almost all nontrivially are XAB} and \Cref{cor:typical-index-two-nontrivial} immediately hold with ``unstable  twin-free connected'' replaced by ``nontrivially unstable''.

%and, although we do not prove this, we expect it is also dominated by similar constructions related to graphs with nontrivial automorphism of small support.

We first prove some important preliminary results in \Cref{sec:prelim} and then the main theorems in \Cref{sec:main}.

%In Section~2, we develop counting estimates for automorphisms of canonical double covers and prove Theorem~\ref{thm:almost all are stable}. In Section~3, we prove Theorem~\ref{thm:unstable have twins}. In Section~4 we analyse nontrivially unstable graphs and prove Theorems~\ref{thm:allmost all twin-free are disconnected}, \ref{thm:almost all nontrivially are XAB} and \ref{cor:typical-index-two-nontrivial}.

\section{Preliminaries}\label{sec:prelim}

Let \(X\) be a graph with vertex-set $V$. For $v\in V$, the \textit{neighbourhood} $N_X(v)$ of $v$ in $X$ is the set of vertices of $X$ adjacent with $v$. 
Note that  
\begin{equation*}\label{Eq:NBX}
    N_{BX}(v,0) = N_X(v)\times \{1\} \text{ and } N_{BX}(v,1) = N_X(v)\times \{0\}.
\end{equation*} 
%Thus, $N_{BX}(v,0)$ is uniquely determined by $N_X(v)$ and vice-versa. 
Let \(g_1,g_2\in \Sym(V)\). We denote by \((g_1,g_2)\) the permutation of \(V(BX)=V\times\{0,1\}\) defined by 
\[
 (v,0)^{(g_1,g_2)}=(v^{g_1},0) \quad\text{and}\quad (v,1)^{(g_1,g_2)}=(v^{g_2},1).
\]

%Recall that $\tau$ is the automorphism of $BX$ interchanging $(v,0)$ and $(v,1)$ for all $v\in V$. 
%Note that $\tau$ interchanges $V\times\{0\}$ and $V\times\{1\}$, whereas  $(g_1,g_2)$ with $g_1,g_2\in\Sym(V)$ preserves them.

%In particular, if \(X\) is connected, nonbipartite and twin-free, then \(X\) is nontrivially unstable if and only if there exist \(g_1,g_2\in \Sym(V)\), with \(g_1\ne g_2\), such that $(g_1,g_2)\in \Aut(BX).$

Given a group $G\leq\Sym(V)$, the \textit{support} of $G$, denoted  $\supp(G)$, is the set of elements of $V$ not fixed by $G$.

\begin{lemma}\label{lem:split}
   Let $X$ be a graph with vertex-set $V$, let $v\in V$ and let $g_1,g_2\in \Sym(V)$ such that $(g_1,g_2) \in \Aut(BX)$. Let $G\coloneqq\langle g_1,g_2\rangle$, $W\coloneqq  V\setminus\supp(G)$ and $g\in G$. Then 
   \begin{enumerate}
   \item $N_X(v^{g_1})=\left( N_{X}(v) \right)^{g_2} \text{ and } N_X(v^{g_2})=\left( N_{X}(v)\right)^{g_1}.$ \label{EqNeighbourhoods}
   \item $N_{X}(v^g) \cap W=N_{X}(v) \cap W.$ \label{EqNeighbourhoodsInFixed}
    \item $v$ and ${v}^{g_2^{-1}g_1}$ are nonadjacent in $X$.\label{LEM: no edges between v and image}
   \end{enumerate}
If moreover $X$ is twin-free, then
\begin{enumerate}\setcounter{enumi}{3}
\item If one of $g_1$ or $g_2$ is the identity, they both are. \label{lem:(g,1)ImpliesTwins}
\item $g_1$ and $g_2$ have the same order. \label{LEM: when X twin-free pi0 pi1 and iota are iso}

\item   If $g_1\neq g_2$, then $|\supp(G)|\geq 4.$ \label{lem:support at least 4}
\end{enumerate}
\end{lemma}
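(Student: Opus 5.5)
The plan is to derive everything from the adjacency condition encoded by $(g_1,g_2)\in\Aut(BX)$: since this permutation preserves $V\times\{0\}$ and $V\times\{1\}$, it maps $N_{BX}(v,0)=N_X(v)\times\{1\}$ onto $N_{BX}(v^{g_1},0)=N_X(v^{g_1})\times\{1\}$. Applying it to the left-hand side gives $N_X(v)^{g_2}\times\{1\}$, so $N_X(v^{g_1})=N_X(v)^{g_2}$. The same computation at $(v,1)$ gives the second identity, which proves \pref{EqNeighbourhoods}.

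For \pref{EqNeighbourhoodsInFixed}, we intersect \pref{EqNeighbourhoods} with $W$. Every element of $W$ is fixed by $g_1$ and $g_2$, so $W^{g_2}=W$ and $(N_X(v)\cap W)^{g_2}=N_X(v)\cap W$. Hence $N_X(v^{g_1})\cap W=N_X(v)\cap W$, and similarly for $g_2$; the same holds for the inverses, since $v^{g_1^{-1}}$ has the same $W$-neighbourhood as $v$. Writing $g$ as a word in $g_1^{\pm1},g_2^{\pm1}$ and inducting on the word length gives the claim for all $g\in G$.

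For \pref{LEM: no edges between v and image}, put $u=v^{g_2^{-1}}$. Suppose $v\sim v^{g_2^{-1}g_1}=u^{g_1}$. Then $u^{g_1}\in N_X(v)$, so by \pref{EqNeighbourhoods} we get $u^{g_1}\in N_X(u^{g_2})=N_X(u)^{g_1}$, that is, $u\in N_X(u)$. This contradicts simplicity (no loops).

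Now assume $X$ is twin-free. For \pref{lem:(g,1)ImpliesTwins}, suppose $g_2=1$. Then \pref{EqNeighbourhoods} gives $N_X(v^{g_1})=N_X(v)$ for all $v$. Since twins must also be nonadjacent, we need $v\not\sim v^{g_1}$, and this follows from \pref{LEM: no edges between v and image} with $g_2=1$. Twin-freeness then forces $v^{g_1}=v$, so $g_1=1$; the case $g_1=1$ is symmetric.

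For \pref{LEM: when X twin-free pi0 pi1 and iota are iso}, note that the automorphisms of $BX$ of the form $(h_1,h_2)$ form a group, and the projections $(h_1,h_2)\mapsto h_1$ and $(h_1,h_2)\mapsto h_2$ are homomorphisms. By \pref{lem:(g,1)ImpliesTwins} both are injective, so $(g_1,g_2)^k=1$ holds iff $g_1^k=1$ holds iff $g_2^k=1$. Hence the orders agree.

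For \pref{lem:support at least 4}, which I expect to need the most care, I would take $h=g_2^{-1}g_1\ne 1$. Composing with $\tau$ shows $(g_2,g_1)\in\Aut(BX)$, and so $(g_1,g_2)(g_2,g_1)^{-1}$ and its variants lie in $\Aut(BX)$; these should yield $(h,h')$-type elements whose support lies in $\supp(G)$. The direct route is a case check: if $|\supp(G)|\le 3$, then $G$ acts on at most three points, and $g_1,g_2$ are distinct permutations of those points. Choose $v$ with $v^h\neq v$. By \pref{LEM: no edges between v and image}, $v\not\sim v^h$. By \pref{EqNeighbourhoodsInFixed}, $v$ and $v^h$ agree on $W$. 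On the at most three support points, one then checks using \pref{EqNeighbourhoods} that their neighbourhoods agree, which makes them twins, a contradiction. I would first rule out $|\supp(G)|=2$: there $G=\langle(x\,y)\rangle$, so one of $g_1,g_2$ is $1$ and the other must be too by \pref{lem:(g,1)ImpliesTwins}, contradicting $g_1\ne g_2$. The remaining case, $|\supp(G)|=3$ with $G\le\Sym(3)$, is handled by a short enumeration of pairs $(g_1,g_2)$ with equal orders. This is the main obstacle.
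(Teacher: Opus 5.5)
Parts (1)--(5) are correct and follow the paper closely. Two of your arguments are packaged differently, and both variants are harmless:
\begin{itemize}
\item you derive (3) from (1), whereas the paper uses directly that $(w,0)$ and $(w,1)$ are nonadjacent in $BX$;
\item you prove (5) via injectivity of the two coordinate projections, whereas the paper uses the power $(g_1,g_2)^{\mathrm{ord}(g_2)}$.
\end{itemize}
The one place where your proposal is not yet a proof is (6). Your skeleton is the paper's: pick $v$ moved by $h=g_2^{-1}g_1$, use (3) and (2), and enumerate equal-order pairs. But the decisive step is only asserted (``one then checks using (1)''): you never show that $v$ and $v^{h}$ agree on the third point of $\supp(G)$. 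The enumeration this step depends on is also left undone. So as written, (6) is not established.

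The missing observation finishes it at once, and it makes your separate $|\supp(G)|=2$ case unnecessary. Since $g_1\neq g_2$, (4) shows neither $g_i$ is trivial, and (5) gives them equal order. On at most three points, this leaves only two possibilities: two distinct transpositions, or $g_2=g_1^{-1}$ a $3$-cycle. In both cases $h$ is a $3$-cycle on $\supp(G)$. Applying (3) to $v$, $v^{h}$ and $v^{h^2}$ then shows that $\supp(G)$ is an independent set. Hence $N_X(v)$ and $N_X(v^{h})$ both lie in $W$, and they coincide by (2), since $h\in G$. So $v\neq v^{h}$ are twins, a contradiction. Your claim that (1) suffices is in fact true: in each of the two cases, (1) alone forces the three support vertices to be pairwise nonadjacent. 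But that computation is exactly the content you needed to supply.
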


\begin{proof}
\begin{enumerate}
    \item 
We have
\begin{align*}
        N_X(v^{g_1})\times \{1\} &= N_{BX}(v^{g_1},0) = N_{BX}((v,0)^{(g_1,g_2)})  \\
        &= \left(N_{BX}(v,0)\right)^{(g_1,g_2)} = \left(N_X(v)\times\{1\}\right)^{(g_1,g_2)}  \\
        &= N_{X}(v)^{g_2}\times \{1\}.
\end{align*}
It follows that  $N_X(v^{g_1})=N_{X}(v)^{g_2}$, and an analogous argument yields $N_X(v^{g_2})=N_{X}(v)^{g_1}$.  
\item  From (\ref{EqNeighbourhoods}), we get
$$N_X(v^{g_1})\cap W=N_{X}(v)^{g_2}\cap W=(N_{X}(v)\cap W)^{g_2}=N_{X}(v)\cap W.$$
A symmetrical argument yields that $N_X(v^{g_2})\cap W=N_{X}(v)\cap W$ and thus $N_X(v^g)\cap W=N_{X}(v)\cap W$.
    \item Note that $(v^{g_2^{-1}},0)$ and $(v^{g_2^{-1}},1)$ are nonadjacent in $BX$. Consequently, since $(g_1,g_2) \in \Aut(BX)$, we have that \[{(v^{g_2^{-1}},0)}^{(g_1,g_2)} = (v^{g_2^{-1}g_1},0)\] and \[{(v^{g_2^{-1}},1)}^{(g_1,g_2)} = (v,1)\] are also nonadjacent in $BX$. It follows that $v^{g_2^{-1}g_1}$ and $v$ are nonadjacent in $X$, as desired.
\end{enumerate}

\begin{enumerate}\setcounter{enumi}{3}
\item    Without loss of generality, suppose that $g_2=\id$. Applying (\ref{EqNeighbourhoods}), we get $N_{X}(v) = N_{X}(v^{g_1})$. Since $X$ is twin-free, we have that $v=v^{g_1}$ and, since $v$ is arbitrary, we have $g_1=\id$, as required.

\item Assume for contradiction that $\mathrm{ord}(g_1) \neq \mathrm{ord}(g_2)$. Without loss of generality, we may assume that $\mathrm{ord}(g_1)$ does not divide $\mathrm{ord}(g_2)$ and thus $g_1^{\mathrm{ord}(g_2)} \neq \id,$ and
    $$
    \left(g_1^{\mathrm{ord}(g_2)},\id\right)
    =
    {(g_1,g_2)}^{\mathrm{ord}(g_2)}
    \in
    \Aut(BX),
    $$
    contradicting (\ref{lem:(g,1)ImpliesTwins}).
\item     Suppose for a contradiction that $|\supp(G)|\leq 3$. 
By (\ref{LEM: when X twin-free pi0 pi1 and iota are iso}), $g_1$ and $g_2$ have the same order. Since $g_1\neq g_2$, either $g_1$ and $g_2$ are distinct transpositions or $g_1$ is a $3$-cycle and $g_2 = g_1^{-1}$. In both cases, $g_2^{-1}g_1$ is a cycle of length $3$ on $\supp(G)$. By (\ref{LEM: no edges between v and image}), it follows that the three vertices contained in $\supp(G)$ are  nonadjacent in $X$. Let $v$ such that $v\neq v^{g_1}$, in particular, $v\in \supp(G)$.  By (\ref{EqNeighbourhoodsInFixed}), we have $N_{X}(v^{g_1}) \cap W=N_{X}(v) \cap W$ and thus $v$ and $v^{g_1}$ are twins in $X$, contradicting the assumption that $X$ is twin-free.
\end{enumerate}
\end{proof}

\begin{lemma}\label{basic}
A connected nonbipartite graph $X$ with vertex-set $V$ is unstable if and only if there exist \(g_1,g_2\in \Sym(V)\), with \(g_1\ne g_2\), such
that $(g_1,g_2)\in \Aut(BX).$
\end{lemma}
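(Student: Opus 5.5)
The plan is to show that every automorphism of $BX$ either preserves or swaps the two colour classes $V\times\{0\}$ and $V\times\{1\}$, and then to identify exactly which automorphisms come from $\Aut(X)\times\langle\tau\rangle$.

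\emph{Step 1: the bipartition is preserved.} Since $X$ is connected and nonbipartite, $BX$ is connected. Indeed, given $(x,i)$ and $(y,j)$, choose a walk from $x$ to $y$ in $X$ of the right parity. This is possible because $X$ contains an odd cycle, which can be traversed to switch parity. Hence $BX$ is a connected bipartite graph, and its bipartition $\{V\times\{0\},V\times\{1\}\}$ is unique. Every $\varphi\in\Aut(BX)$ therefore either fixes both parts setwise or swaps them. In the swapping case $\varphi\tau$ fixes both parts. So every element of $\Aut(BX)$ has the form $(g_1,g_2)$ or $(g_1,g_2)\tau$ with $g_1,g_2\in\Sym(V)$, where $(g_1,g_2)\in\Aut(BX)$.

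\emph{Step 2: identify the ``stable'' automorphisms.} Next observe that $(g,g)\in\Aut(BX)$ if and only if $g\in\Aut(X)$. This follows from the edge description: $\{(x,0),(y,1)\}$ is an edge exactly when $\{x,y\}\in E(X)$. Moreover, the canonical copy of $\Aut(X)\times\langle\tau\rangle$ in $\Aut(BX)$ is precisely $\{(g,g),(g,g)\tau : g\in\Aut(X)\}$.

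\emph{Step 3: conclude.} Suppose some $(g_1,g_2)\in\Aut(BX)$ has $g_1\neq g_2$. This element is not of the form $(g,g)$, so it lies outside $\Aut(X)\times\langle\tau\rangle$, and $X$ is unstable. Conversely, suppose $X$ is unstable, and pick $\varphi\in\Aut(BX)\setminus(\Aut(X)\times\langle\tau\rangle)$. Multiplying by $\tau$ if necessary, which keeps $\varphi$ outside the subgroup, Step 1 lets us write $\varphi=(g_1,g_2)$. If $g_1=g_2$, then by Step 2 we would have $g_1\in\Aut(X)$ and $\varphi$ would lie in the subgroup, a contradiction. Hence $g_1\ne g_2$.

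The only nontrivial point is Step 1, namely the connectivity of $BX$ and the uniqueness of the bipartition of a connected bipartite graph. Both are standard and can be stated briefly.
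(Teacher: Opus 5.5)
Your proof is correct and is essentially the paper's argument: connectivity of $BX$ makes the bipartition unique, composing with $\tau$ reduces to elements of the form $(g_1,g_2)$, and $(g,g)\in\Aut(BX)$ forces $g\in\Aut(X)$. The differences are cosmetic. You verify that $BX$ is connected directly, with the odd-cycle parity argument, where the paper cites Weichsel's theorem. You also read off $(g,g)\in\Aut(BX)\Rightarrow g\in\Aut(X)$ from the edge description, where the paper uses the neighbourhood identity $N_X(v^{g_1})=N_X(v)^{g_2}$ from Lemma~\ref{lem:split}.
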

\begin{proof}
Since \(X\) is connected and nonbipartite,  it follows by Weichsel’s Theorem (see \cite[Theorem 5.9]{ProductHandbook}) that \(BX\) is connected hence $\{V\times\{0\},V\times\{1\}\}$ is the unique bipartition of $BX$ and thus every automorphism of $BX$ preserves it.

Suppose that $X$ is unstable. Then $\Aut(X)\times \langle\tau\rangle<\Aut(BX)$. By the previous paragraph, there exist \(g_1,g_2\in \Sym(V)\) with $(g_1,g_2)\in \Aut(BX)$ and $(g_1,g_2)$ not of the form  \((g,g)\) with
\(g\in \Aut(X)\).  If $g_1=g_2$, then Lemma~\ref{lem:split}(\ref{EqNeighbourhoods}) implies that $g_1\in \Aut(X)$, a contradiction, so we have $g_1\neq g_2$, as required. The converse is obvious.
\end{proof}

\begin{lemma}\label{lem:Orbits}
Let $g_1,g_2\in \Sym(V)$, let \(G\coloneqq\langle g_1,g_2\rangle\) and let $G^*\coloneqq\langle (g_1,g_2),\tau\rangle\leq\Sym(V\times \{0,1\})$. Then  the orbits of $G^*$ are exactly the sets of the form $O\times \{0,1\}$ with $O$ an orbit of $G$.
\end{lemma}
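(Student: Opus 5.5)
The plan is a direct computation with the two generators of $G^*$. The key observation is that $\tau$ normalises the set of permutations of the form $(h_1,h_2)$. Indeed, for any $h_1,h_2\in\Sym(V)$ we have $\tau(h_1,h_2)\tau=(h_2,h_1)$: applying $\tau$, then $(h_1,h_2)$, then $\tau$ sends $(v,0)\mapsto(v,1)\mapsto(v^{h_2},1)\mapsto(v^{h_2},0)$, and symmetrically $(v,1)\mapsto(v^{h_1},1)$.

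From this we get the structure of $G^*$. Let $H\coloneqq\langle (g_1,g_2),(g_2,g_1)\rangle$. Then $H$ is normalised by $\tau$, so
\[
G^*=H\langle\tau\rangle=H\cup H\tau .
\]
Every element of $H$ has the form $(h_1,h_2)$ with $h_1,h_2\in G$, because the map $(h_1,h_2)\mapsto$ pair is just the embedding of $\Sym(V)\times\Sym(V)$. Hence every element of $G^*$ maps $(v,i)$ to $(v^{h},j)$ for some $h\in G$ and some $j\in\{0,1\}$. It follows that the $G^*$-orbit of $(v,i)$ is contained in $v^G\times\{0,1\}$.

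For the reverse inclusion, note that the first coordinate of the elements of $H$ ranges over a set containing $g_1$ and $g_2$ and closed under products. More precisely, the projection of $H$ onto its first coordinate is the subgroup generated by $g_1$ and $g_2$, which is $G$. So for any $h\in G$ there is some $(h,h')\in H$, which sends $(v,0)$ to $(v^h,0)$. Composing with $\tau$ reaches $(v^h,1)$. Starting instead from $(v,1)$, first apply $\tau$ to reduce to the previous case. Therefore the orbit of $(v,i)$ is exactly $v^G\times\{0,1\}$, which is the claimed description, with $O=v^G$ ranging over all $G$-orbits.

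The only point requiring care is the claim that the first-coordinate projection of $H$ is all of $G$. This holds because projection is a homomorphism and the images of the generators $(g_1,g_2)$ and $(g_2,g_1)$ are $g_1$ and $g_2$. Everything else is routine.
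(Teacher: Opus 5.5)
Your proof is correct and follows essentially the same route as the paper. The paper observes that $\tau\in G^*$ makes every $G^*$-orbit closed under $(x,0)\leftrightarrow(x,1)$ and that $\tau(g_1,g_2)\tau=(g_2,g_1)\in G^*$, then says the conclusion easily follows; you supply exactly the omitted details, namely the decomposition $G^*=H\cup H\tau$ for the inclusion of each orbit in $v^G\times\{0,1\}$, and surjectivity of the first-coordinate projection $H\to G$ for the reverse inclusion.
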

\begin{proof}
    Since $\tau \in G^*$, an orbit of $G^*$ contains $(x,0)$ if and only if it contains $(x,1)$. Using this and the fact that $\tau (g_1,g_2)\tau = (g_2,g_1) \in G^*$, the conclusion easily follows.
\end{proof}

\begin{cor} \label{COR: if suppG has order 4 then Y=X(A,B)}
     Let $Y$ be a twin-free graph with vertex-set $V$ and let $g_1,g_2 \in \mathrm{Sym}(V)$ such that $g_1 \neq g_2$ and $(g_1,g_2) \in \Aut(BY)$. Let $G\coloneqq\langle g_1,g_2\rangle$ and let $X$ be the subgraph of $Y$ induced on $V\setminus\supp(G)$. If $|\supp(G)|=4$,  then there exist subsets $A,B \subseteq V(X)$ such that $Y=X(A,B)$.
\end{cor}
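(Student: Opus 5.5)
The plan is to describe the restriction of $G$ to $S\coloneqq\supp(G)$ and the subgraph of $Y$ induced on $S$, and then to read off $A$ and $B$. Throughout, $W\coloneqq V\setminus S=V(X)$. Two facts carry most of the argument. First, by \fullcref{lem:split}{EqNeighbourhoodsInFixed}, all vertices in the same $G$-orbit have the same neighbourhood in $W$. Since $Y$ is twin-free, two such vertices must therefore be distinguished by their neighbourhoods inside $S$. Second, \fullcref{lem:split}{EqNeighbourhoods} gives $|N_Y(v^{g_1})|=|N_Y(v)|$. Since $g_1,g_2$ fix $W$ pointwise and so preserve $N_Y(v)\cap W$, the number of neighbours of $v$ inside $S$ is constant on each $G$-orbit.

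The first step is to list the possible orbit structures. Since $S$ is exactly the set of moved points and $|S|=4$, the $G$-orbits on $S$ are either a single orbit of size $4$ or two orbits of size $2$. (An orbit of size $3$ would leave a fourth point fixed, so that point would not lie in $S$.)

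In the single-orbit case, the induced graph $Y[S]$ is regular on $4$ vertices, so it is empty, a perfect matching, $C_4$ or $K_4$. If it is empty, any two vertices of $S$ are twins. If it is $C_4$, opposite vertices are twins. If it is $K_4$, pick $v\in S$ moved by $h\coloneqq g_2^{-1}g_1$; such a $v$ exists because $h\neq\id$ and $h$ fixes $W$. Then $v$ is adjacent to $v^h$, contradicting \fullcref{lem:split}{LEM: no edges between v and image}. Hence $Y[S]$ is a perfect matching $\{a_1,b_1\},\{a_2,b_2\}$, and all four vertices have the same neighbourhood $A$ in $W$. This gives $Y=X(A,A)$.

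In the two-orbit case, write the orbits as $\{a_1,a_2\}$ and $\{b_1,b_2\}$, and let $A$ and $B$ be their respective common neighbourhoods in $W$. Twin-freeness forces $a_1,a_2$ to have different neighbourhoods in $S$. Using the constant-degree condition on each orbit, it then remains to show that $Y[S]$ has exactly the two matching edges $a_1b_1,a_2b_2$ (after relabelling), with no edge $a_1a_2$ and no edge $b_1b_2$. The configurations to rule out are those containing the edge $a_1a_2$ or $b_1b_2$.

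I expect this to be the main obstacle, and I would handle it as follows. Suppose $a_1\sim a_2$. Then $h$ cannot swap $a_1,a_2$, by \fullcref{lem:split}{LEM: no edges between v and image}, so $h$ fixes both. Hence $g_1$ and $g_2$ agree on $\{a_1,a_2\}$, so both fix the $a$'s or both swap them. They cannot both fix them, because then $G$ would fix them and they would lie outside $S$. So $g_1=g_2$ on $\{a_1,a_2\}$ is the transposition. Then \fullcref{lem:split}{EqNeighbourhoods}, in the form $N(a_2)=N(a_1)^{g_2}$ and $N(a_2)=N(a_1)^{g_1}$, forces $g_1$ and $g_2$ to act in the same way on the $b$-neighbours of $a_1$. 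If $a_1$ has a neighbour among the $b$'s, this shows $g_1=g_2$ on $\{b_1,b_2\}$ as well, so $g_1=g_2$, which is a contradiction. If $a_1$ has no neighbour among the $b$'s, then $a_1,a_2$ have no $b$-neighbours at all. Combined with the $a$-orbit's constant degree and the symmetric argument on the $b$-side, this again produces twins or forces $g_1=g_2$.

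The same reasoning applies to $b_1b_2$. This leaves exactly $Y[S]=\{a_1b_1,a_2b_2\}$, so $Y=X(A,B)$.
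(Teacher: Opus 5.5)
Your single-orbit case is correct, and excluding $K_4$ via Lemma~\ref{lem:split}(\ref{LEM: no edges between v and image}) is more self-contained than the paper's appeal to the stability of $K_4$. The last step of the two-orbit case is also correct: once the edges $a_1a_2$ and $b_1b_2$ are excluded, constant degree plus distinct neighbourhoods forces $Y[S]=\{a_1b_1,a_2b_2\}$. The gap is in excluding $a_1\sim a_2$. You correctly obtain that $g_1$ and $g_2$ both act as $(a_1,a_2)$ on $\{a_1,a_2\}$, whence $N(a_1)^{g_1}=N(a_2)=N(a_1)^{g_2}$. But this is an equality of \emph{sets}: it only says that $N(a_1)$ is invariant under $g_1g_2^{-1}$. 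Your inference that a $b$-neighbour of $a_1$ forces $g_1=g_2$ on $\{b_1,b_2\}$ therefore does not go through when $a_1$ is adjacent to both $b_1$ and $b_2$. In that case $N(a_1)\cap\{b_1,b_2\}=\{b_1,b_2\}$ is invariant under all of $G$, and you learn nothing. The constant-degree condition does not help either: $Y[S]=K_4$ minus the edge $b_1b_2$ has constant degree on both orbits, contains $a_1a_2$, and gives $a_1,a_2$ different neighbourhoods in $S$. In fact, since $g_1\neq g_2$, they must differ on $\{b_1,b_2\}$, so $g_1g_2^{-1}=(b_1,b_2)$ and $N(a_1)\cap\{b_1,b_2\}$ can never be a singleton. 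So the sub-case you treat rigorously is vacuous. The two sub-cases that can actually occur are either only sketched (no $b$-neighbours) or handled by an invalid step (both $b$-neighbours).

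The repair uses only tools you already have. As just noted, $h=g_2^{-1}g_1=(b_1,b_2)$, so $b_1\not\sim b_2$ by Lemma~\ref{lem:split}(\ref{LEM: no edges between v and image}). Also $N(a_2)\cap\{b_1,b_2\}=(N(a_1)\cap\{b_1,b_2\})^{g_1}=N(a_1)\cap\{b_1,b_2\}$, which is $\varnothing$ or $\{b_1,b_2\}$. Either way, $b_1$ and $b_2$ are adjacent to the same $a$'s, have the same neighbourhood $B$ in $W$, and are nonadjacent. So they are twins, a contradiction.

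The configuration you are excluding, $g_1$ and $g_2$ agreeing on $\{a_1,a_2\}$, is exactly the paper's Subcase~2.2, where $\{g_1,g_2\}=\{(a_1,a_2),(a_1,a_2)(b_1,b_2)\}$. The paper reaches it by first using Lemma~\ref{lem:split}(\ref{LEM: when X twin-free pi0 pi1 and iota are iso}) to list the possible pairs. It then shows $b_1,b_2$ are twins in one stroke, with no split on $N(a_1)\cap\{b_1,b_2\}$, by applying Lemma~\ref{lem:split}(\ref{EqNeighbourhoods}) both to $(g_1,g_2)$ and to $(g,g)=(g_1,g_2)(g_1,g_2)^{\tau}$ with $g=(b_1,b_2)$.
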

\begin{proof}
Since $|\supp(G)|=4$, either $G$ has two orbits of size $2$ or one orbit of size $4$.
  
\begin{case} 
$G$ has an  orbit of size $4$.
\end{case}

 Let ${O}$ be this orbit, so $W\coloneqq V\backslash {O}=V\setminus\supp(G)$. Let $Z$ be the subgraph of $Y$ induced on ${O}$. Then $BZ$ is a subgraph of $BY$ and, by Lemma~\ref{lem:Orbits}, $\langle(g_1,g_2),\tau \rangle\leq \Aut(BY)$ acts transitively on $O\times \{0,1\}=V(BZ)$. In particular, both $BZ$ and $Z$ are regular graphs. 

Let $a\in{O}$ and let $A\coloneqq N_Y(a)\cap W$. By Lemma~\ref{lem:split}(\ref{EqNeighbourhoodsInFixed}), $A$ does not depend on the choice of $a$. In particular, since $Y$ is twin-free, so is $Z$.  Since $g_1\neq g_2$, $\supp(g_1,g_2)\subseteq O\times \{0,1\}=V(BZ)$ and $(g_1,g_2) \in \Aut(BY)$, it follows that $Z$ is unstable. The only regular twin-free graphs on four vertices are  $2K_2$ and $K_4$, but $K_4$ is stable hence $Z\cong 2K_2$. Hence   $Y=X(A,A)$,  as required.

 \begin{case} 
 $G$ has two orbits of size $2$.
\end{case}

Let  $\{a_1,a_2\}$ and $\{b_1,b_2\}$ be distinct  orbits of size $2$ of  $G$, so $W=V\backslash\{a_1,a_2,b_1,b_2\}$. By Lemma~\ref{lem:split}(\ref{LEM: when X twin-free pi0 pi1 and iota are iso}), $g_1$ and $g_2$ have the same order. Since $g_1 \neq g_2$, up to relabeling, we have that either
    \[\text{ $g_1 = (a_1,a_2)(b_1,b_2)$ and $g_2 = (a_1,a_2)$}\]
    or
    \[\text{$g_1 = (a_1,a_2)$ and $g_2 = (b_1,b_2)$}.\]

\begin{subcase}
 $g_1 = (a_1,a_2)$ and $g_2 = (b_1,b_2)$.
\end{subcase}

Let  $W\coloneqq V\setminus\supp(G)=V\setminus\{a_1,a_2,b_1,b_2\}$. Since $g_2^{-1}g_1 = (a_1,a_2)(b_1,b_2)$, 
   Lemma~\ref{lem:split}(\ref{LEM: no edges between v and image}) implies that $a_1$ is nonadjacent to $a_2$ in $Y$ and  $b_1$ is nonadjacent to $b_2$ in $Y$. By Lemma~\ref{lem:split}(\ref{EqNeighbourhoodsInFixed}), we have
    \[
    A \coloneqq N_{Y}(a_1) \cap W= N_{Y}(a_1^{g_1}) \cap W=N_{Y}(a_2) \cap W
    \]
    and
    \[
    B \coloneqq N_{Y}(b_1) \cap W=N_{Y}(b_2^{g_2}) \cap W=N_{Y}(b_2) \cap W.
    \]
    
Since $Y$ is twin-free, $a_1$ and $a_2$ are not twins, so they are each adjacent to exactly one of $b_1$ or $b_1$, and vice versa. Without loss of generality, let $a_1$ be adjacent to $b_1$ and nonadjacent to $b_2$ and $a_2$ be adjacent to $b_2$ and nonadjacent to $b_1$. It now follows that $Y=X(A,B)$, as required.
    
\begin{subcase}\label{LEM 5.3 bad case}
 $g_1 = (a_1,a_2)(b_1,b_2)$ and $g_2 = (a_1,a_2)$.
\end{subcase}

   Let $g\coloneqq(b_1,b_2)$  and note that    $(g,g)=(g_1g_2,g_2g_1)= (g_1,g_2)(g_1,g_2)^\tau\in \Aut(BY)$. By Lemma~\ref{lem:split}(\ref{EqNeighbourhoods}) applied with $v=b_1$, we have 
   $$N_Y(b_2)=N_Y(b_1^{g_1})=N_{Y}(b_1)^{g_2}$$
   and 
      $$N_Y(b_2)=N_Y(b_1^{g})=N_{Y}(b_1)^{g}.$$
   Let $V_a\coloneqq V\setminus \{a_1,a_2\}$ and $V_b\coloneqq V\setminus \{b_1,b_2\}$. We now have 
  $$N_Y(b_2)\cap V_a=N_{Y}(b_1)^{g_2}\cap V_a=N_{Y}(b_1)\cap V_a$$ 
  and
    $$N_Y(b_2)\cap V_b=N_{Y}(b_1)^{g}\cap V_b=N_{Y}(b_1)\cap V_b.$$
    Since $V_a\cup V_b=V$, it follows that $N_Y(b_2)=N_{Y}(b_1)$, contradicting the fact that $Y$ is twin-free.
\end{proof}

Finally, we recall some basic properties of Construction~\ref{constructionX(A,B)}.

\begin{prop}[{\cite[Proposition 5.2]{SomeConditionsImplyingStability}}]\label{X(AB)Unstable}
Let $X$ be a graph with vertex-set $V$ and let $A,B\subseteq V$. Then
\begin{enumerate}
\item $(a_1,a_2)(b_1,b_2)\in\Aut(X(A,B))$,
\item $\gamma\coloneqq((a_1,0),(a_2,0))((b_1,1),(b_2,1))$
is an automorphism of the double cover of $X(A,B)$,
\item we have $\langle \gamma,\tau\rangle\cong\DihGrp_4$.
\end{enumerate}
In particular, $X(A,B)$ is unstable.  Moreover, if $X$ is connected and twin-free, and at least one of $A$ and $B$ is nonempty, then $X(A,B)$ is also connected and twin-free.
\end{prop}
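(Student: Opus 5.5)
The statement to prove is \Cref{X(AB)Unstable}, which is cited from \cite{SomeConditionsImplyingStability}. I would nonetheless verify it directly, since everything reduces to checking adjacencies in $Y\coloneqq X(A,B)$ and $BY$.

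For (1), the plan is to check that $\sigma\coloneqq(a_1,a_2)(b_1,b_2)$ maps edges to edges. Edges inside $X$ are fixed. Edges $\{a_i,w\}$ with $w\in A$ are swapped with $\{a_{3-i},w\}$, and the same happens on the $B$ side. The edges $\{a_1,b_1\}$ and $\{a_2,b_2\}$ are interchanged.

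For (2), I would write $\gamma=(g_1,g_2)$ with $g_1=(a_1,a_2)$ and $g_2=(b_1,b_2)$, and check that it preserves the edges of $BY$. An edge of $BY$ has the form $\{(x,0),(y,1)\}$ with $\{x,y\}\in E(Y)$, and it is sent to $\{(x^{g_1},0),(y^{g_2},1)\}$. So I need $\{x^{g_1},y^{g_2}\}\in E(Y)$ for every ordered pair $(x,y)$ with $\{x,y\}\in E(Y)$. I would go through the cases:
\begin{itemize}
\item If $x,y\in V$, both are fixed.
\item If $x=a_i$ and $y\in A$, the image is $\{a_{3-i},y\}$, which is an edge. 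The cases $x\in A$, $y=a_i$ (only $y$ moves, by $g_2$, which fixes $a_i$) and the analogous cases with $b$ work the same way.
\item If $x=a_1$ and $y=b_1$, the image is $\{a_2,b_2\}$. If $x=b_1$ and $y=a_1$, both are fixed under the respective maps, since $g_1$ fixes $b_1$ and $g_2$ fixes $a_1$.
\end{itemize}
A bijection that maps edges to edges on a finite graph preserves the edge count, so it is an automorphism.

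For (3), both $\gamma$ and $\tau$ are involutions. I would compute $\gamma\tau$ and show it has order $4$. For instance, it maps $(a_1,0)\mapsto(a_2,0)\mapsto(a_2,1)$, and tracking the orbit gives a $4$-cycle $(a_1,0)\to(a_2,1)\to(a_2,0)\to(a_1,1)\to(a_1,0)$ (checking the direction). So the group generated by two involutions whose product has order $4$ is $\DihGrp_4$.

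For instability, $\gamma$ fixes $V\times\{0\}$ setwise but acts differently on the two layers, so $\gamma\notin\Aut(Y)\times\langle\tau\rangle$. Elements of that group are of the form $(g,g)$ or $(g,g)\tau$, and $\gamma$ is $(g_1,g_2)$ with $g_1\ne g_2$.

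For the final claim, connectivity follows because a nonempty $A$ (say) attaches $a_1,a_2$ to $X$, and then $b_1,b_2$ attach through the edges $a_ib_i$.

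Twin-freeness is the only step needing care. I would compare neighbourhoods of all pairs:
\begin{itemize}
\item Two vertices of $X$ have distinct neighbourhoods within $X$, since $X$ is twin-free.
\item The vertices $a_1,a_2$ differ on $b_1$, and $b_1,b_2$ differ on $a_1$.
\item $a_i$ versus $b_j$: the vertex $a_i$ is adjacent to $b_i$ but $b_j$ is not adjacent to itself, so if $i=j$ they differ. If $i\ne j$, then $a_i\sim b_i$ while $b_j\not\sim b_i$.
\item A vertex $w\in V(X)$ versus $a_i$: $a_i$ is adjacent to $b_i$, and $w$ is adjacent to $b_i$ only if $w\in B$. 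This does not immediately separate them, so compare instead on $b_{3-i}$: $a_i\not\sim b_{3-i}$, while $w\sim b_{3-i}$ exactly when $w\in B$. If $w\in B$, then $w$ is adjacent to both $b_1,b_2$ whereas $a_i$ is adjacent to only one of them. If $w\notin B$, then $w\not\sim b_i$ while $a_i\sim b_i$. The case of $b_i$ is symmetric.
\end{itemize}
This last case is the main subtlety, and the case analysis above handles it.
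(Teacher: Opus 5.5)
Your proof is correct and follows essentially the same route as the paper. The only difference is that you verify (2) directly, checking that $(g_1,g_2)$ with $g_1=(a_1,a_2)$ and $g_2=(b_1,b_2)$ maps every edge $\{(x,0),(y,1)\}$ of $BY$ to an edge, where the paper cites \cite{SomeConditionsImplyingStability}, and you also write out the connectivity and twin-freeness checks that the paper calls immediate.

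The one detail left implicit in (3) is that $\gamma\tau$ has order exactly $4$ and not just a multiple of $4$. This holds because its remaining cycles are the $4$-cycle on $\{b_1,b_2\}\times\{0,1\}$ and the $2$-cycles $\{(v,0),(v,1)\}$ for $v\in V$.
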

\begin{proof}
The fact that $(a_1,a_2)(b_1,b_2)\in\Aut(X(A,B))$ follows immediately from Construction~\ref{constructionX(A,B)} (and, in particular, Figure~\ref{fig:construction}). By \cite[Proposition 5.2]{SomeConditionsImplyingStability}, $\gamma$ is an automorphism of the double cover of $X(A,B)$. Both $\gamma$ and $\tau$ are involutions, while their product has order $4$, hence, 
$\langle\gamma,\tau\rangle \cong \DihGrp_4$, as required. The last part is immediate.
\end{proof}

For fixed \(g_1, g_2 \in \Sym(V)\), we give upper bounds on the number of graphs \(X\) with vertex-set $V$ such that \((g_1,g_2)\in \Aut(BX)\).

\begin{lemma}\label{Lem:supportsize}
Let $n$ be a positive integer, let $V$ be a set of order $n$, let $g_1,g_2\in \Sym(V)$ and let
\(G\coloneqq\langle g_1,g_2\rangle\).
Let $\mathcal{X}$ be the set of graphs $X$   with vertex-set $V$ such that $(g_1,g_2) \in \Aut(BX)$. Let $s$ be the size of the support of $G$ and let the nontrivial orbits of $G$ have sizes $a_1\geq a_2 \geq \cdots \geq a_k\geq 2$. Then

\begin{enumerate}
\item  \label{lem:g1g2bound} 
 $|\mathcal{X}|\leq 2^{(n-1)+(n-1-a_1)+(n-1-a_1-a_2)+\cdots+(n-1-a_1-\cdots-a_{k-1})+\binom{n-s}{2}}.$
 \item \label{m/2Lemma} If $m\geq 0$ is an even integer with $m\leq s$, then 
\[|\mathcal{X}|\leq
2^{\binom{n}{2}-\frac{m}{2}\left(n-\frac{m}{2}-1\right)}.
\]
\end{enumerate}
\end{lemma}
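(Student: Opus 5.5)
The plan is to encode each graph in $\mathcal{X}$ by a small amount of data. The key tool is \cref{lem:split}\pref{EqNeighbourhoods}, which says that in any $X\in\mathcal{X}$ the neighbourhood of one vertex of an orbit of $G$ determines the neighbourhoods of all vertices in that orbit.

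For \pref{lem:g1g2bound}, let $O_1,\dots,O_k$ be the nontrivial orbits of $G$ with $|O_i|=a_i$, let $W\coloneqq V\setminus\supp(G)$, and fix a representative $v_i\in O_i$. By \cref{lem:split}\pref{EqNeighbourhoods} we have $N_X(v^{g_1})=N_X(v)^{g_2}$ and $N_X(v^{g_2})=N_X(v)^{g_1}$. Applying this to $v^{g_1^{-1}}$ and $v^{g_2^{-1}}$ also gives $N_X(v^{g_1^{-1}})=N_X(v)^{g_2^{-1}}$, and similarly with $g_1,g_2$ swapped. Hence, by induction on word length in $G$, $N_X(v_i)$ determines $N_X(u)$ for every $u\in O_i$.

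We now record $X$ as follows. First record $N_X(v_1)\subseteq V\setminus\{v_1\}$, which takes at most $n-1$ bits and determines every edge meeting $O_1$. Next record $N_X(v_2)\cap (V\setminus(O_1\cup\{v_2\}))$; edges from $v_2$ to $O_1$ are already known, so this takes $n-1-a_1$ bits and determines every edge meeting $O_2$. In general, at step $i$ we record $N_X(v_i)$ restricted to $V\setminus(O_1\cup\dots\cup O_{i-1}\cup\{v_i\})$, which costs $n-1-a_1-\dots-a_{i-1}$ bits. Finally, the edges inside $W$ cost $\binom{n-s}{2}$ bits. Since every edge of $X$ either meets some $O_i$ or lies inside $W$, the map from $\mathcal{X}$ to this data is injective, and \pref{lem:g1g2bound} follows.

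For \pref{m/2Lemma}, I would compare the exponent $E$ of \pref{lem:g1g2bound} with $\binom n2$. List the support vertices orbit by orbit as $u_1,\dots,u_s$, with $O_1$ first, then $O_2$, and so on, each orbit starting with its representative $v_i$. Then
\[\binom n2-\binom{n-s}{2}=\sum_{j=1}^{s}(n-j).\]
The term for the first vertex of $O_i$ is exactly $n-1-a_1-\dots-a_{i-1}$, which is the $i$-th summand of $E$. Hence $\binom n2-E$ equals the sum of $n-j$ over the indices $j$ of the non-representative vertices.

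Since every orbit has size at least $2$, there are $s-k\geq s/2\geq m/2$ non-representatives. Moreover, the $t$-th non-representative has index $j\le 2t$, because each representative in the list is immediately followed by at least one non-representative. Keeping only the first $m/2$ non-representatives (all terms are nonnegative as $j\le s\le n$), we get
\[\binom n2-E\geq\sum_{t=1}^{m/2}(n-2t)=\frac m2 n-\frac m2\Bigl(\frac m2+1\Bigr)=\frac m2\Bigl(n-\frac m2-1\Bigr),\]
which gives \pref{m/2Lemma}.

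The only delicate point is the index bound $j\le 2t$, which is where orbit sizes at least $2$ are used; the rest is bookkeeping.
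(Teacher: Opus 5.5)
Your proposal is correct and follows essentially the paper's own argument. For \pref{lem:g1g2bound} you use the same orbit-by-orbit encoding; you propagate neighbourhoods through each orbit using Lemma~\ref{lem:split}\pref{EqNeighbourhoods} and induction on words, where the paper invokes Lemma~\ref{lem:Orbits}, which is an inessential difference. For \pref{m/2Lemma} you make the same observation as the paper: each non-representative vertex removes its term $n-j$ from the standard count $\binom{n}{2}$, and the $t$-th non-representative sits at position at most $2t$.
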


\begin{proof}
Let \(O_1,\ldots,O_k\) be the nontrivial orbits of \(G\), with $|O_i|=a_i\geq 2$ for all \(i\in \{1,\ldots,k\}\). Then
\[
s= \sum_{i=1}^k a_i.
\]

Let $X\in\mathcal{X}$ and let $v_i\in O_i$. By Lemma~\ref{lem:Orbits}, $O_i\times \{0,1\}$ is an orbit of $\langle (g_1,g_2),\tau\rangle$. It follows that $N_X(v_i)$ uniquely determines $N_{BX}(w)$ for every $w\in O_i\times \{0,1\}$, which in turn uniquely determines $N_X(u)$ for every $u\in O_i$. 
%Hence, specifying a neighbourhood of $v_i\in O_i$ in $X$, completely determines the neighbourhoods of all other vertices in $O_i$. 

There are at most $2^{n-1}$ possibilities for the set $N_X(v_1)$. By the previous paragraph, this choice then determines $N_X(u_1)$ for every $u_1\in O_1$. It follows that $N_X(v_2)\cap O_1$ is also determined, which leaves at most $2^{n-a_1-1}$ possibilities for $N_X(v_2)$, which in turn determines $N_X(u_2)$ for every $u_2\in O_2$. We repeat this procedure for all $i\in \{1,\ldots,k\}$ obtaining that there are at most $2^{n-a_1-\ldots-a_{k-1}-1}$ possibilities for $N_X(v_k)$, hence determining $N_X(u_k)$ for every $u_k\in O_k$. It remains to determine the adjacencies amongst the vertices of $X$ fixed by $G$, that is, those not contained in $O_1\cup\cdots\cup O_k$. There are $n-s$ such vertices, so at most $2^{\binom{n-s}{2}}$ possibilities, which concludes the proof of (\ref{lem:g1g2bound}).

We now prove \pref{m/2Lemma}. Write $m=2t$. We call the vertices of $O_i$ other than $v_i$ \emph{non-representative}. We order the vertices in the following way: first $v_1$, then non-representative vertices in $O_1$, then $v_2$, then non-representative vertices in $O_2$, etc., with fixed vertices at the end. We choose the neighbourhood of the vertices in order. In the usual count of the total number of labeled graphs 
$$2^{\binom{n}{2}}=2^{n-1}\cdot 2^{n-2}\cdots 2^{0},$$
the vertex in the $i$-th position contributes $2^{n-i}$. In this case,  the adjacency of a non-representative vertex is already determined, as in the proof of (\ref{lem:g1g2bound}).  Therefore, if the vertex in the $i$-th position is non-representative, then we lose a factor of  $2^{n-i}$ compared to the usual count.

Since $a_i\geq 2$, the $j$-th non-representative vertex has position at most $2j$. Therefore, the factor lost because of
the $j$-th non-representative vertex is at least
\[
2^{n-2j}.
\]
There are $s-k$ non-representative vertices. Since $a_i\geq 2$, we have $k\leq s/2$ and therefore
\[
s-k \geq s/2\geq m/2=t.
\]
Hence there are at least $t$ non-representative vertices. Consequently, the total loss compared from $2^{\binom{n}{2}}$ is at least
$$2^{n-2}\cdot 2^{n-4}\cdots 2^{n-2t}=2^{t(n-t-1)},$$
as required.
\end{proof}

\begin{lemma}\label{Lem:supportsize5}
Let $n$ be a positive integer and let $V$ be a set of order $n$. The number of graphs $X$ with vertex-set $V$ for which there exist $g_1,g_2\in \Sym(V)$ such that $|\supp(\langle g_1,g_2\rangle)|\geq 5$  and $(g_1,g_2) \in \Aut(BX)$ is $o\left(2^{\frac{n^2-5n+4}{2}}\right)$.
\end{lemma}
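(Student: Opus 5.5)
The plan is a union bound over all pairs $(g_1,g_2)$ whose group $G=\langle g_1,g_2\rangle$ has support size $s\ge 5$. For each pair, \cref{Lem:supportsize} bounds the number of graphs $X$ with $(g_1,g_2)\in\Aut(BX)$. Since $\binom{n}{2}-\frac{n^2-5n+4}{2}=2n-2$, it suffices to show that
\[
\sum_{(g_1,g_2)} 2^{-\mathrm{loss}(g_1,g_2)} = o\bigl(2^{-(2n-2)}\bigr),
\]
where $\mathrm{loss}$ is the amount by which the exponent in \cref{Lem:supportsize} falls below $\binom{n}{2}$. The number of pairs with $|\supp(G)|=s$ is at most $\binom{n}{s}(s!)^2\le n^{3s}$, and trivially at most $(n!)^2\le 2^{2n\log_2 n}$.

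\emph{Case $s=5$.} Here part \pref{m/2Lemma} with $m=4$ only gives loss $2n-6$, which is too weak. So I use part \pref{lem:g1g2bound} with the two possible orbit structures.
\begin{itemize}
\item For a single orbit of size $5$, the exponent is $(n-1)+\binom{n-5}{2}$, so the loss is $(5n-15)-(n-1)=4n-14$.
\item For orbits of sizes $3$ and $2$, the exponent is $(n-1)+(n-4)+\binom{n-5}{2}$, so the loss is $3n-10$.
\end{itemize}
There are $O(n^5)$ such pairs, so their total contribution is $O(n^5 2^{-(3n-10)})$. This is $o(2^{-(2n-2)})$, because $n^5 2^{-n+8}\to 0$.

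\emph{Case $s\ge 6$.} I apply part \pref{m/2Lemma} with $m=2t$, where $t=\min(\lfloor s/2\rfloor,\lfloor n/2\rfloor)\ge 3$, giving loss $f(t)=t(n-t-1)$.
\begin{itemize}
\item If $t\le n/2$, write $f(t)=f(3)+(t-3)(n-t-4)\ge 3n-12+(t-3)(n/2-4)$. Each $t$ corresponds to at most two values of $s\le 2t+1$, so the number of pairs is at most $2n^{6t+3}$. The total contribution is then at most
\[
2n^{21}2^{-3n+12}\sum_{u\ge 0}\bigl(n^{6}2^{-(n/2-4)}\bigr)^u = O\bigl(n^{21}2^{-3n+12}\bigr),
\]
which is $o(2^{-(2n-2)})$.
\item If $s$ is so large that $t=\lfloor n/2\rfloor$ (so $s\ge n-1$), I bound the number of pairs crudely by $(n!)^2$. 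The loss is then about $n^2/4$, and $2^{2n\log_2 n-n^2/4+2n}\to 0$.
\end{itemize}

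The main obstacle is the $s=5$ case, where the general half-support bound is insufficient and one must exploit the precise orbit structure via part \pref{lem:g1g2bound}. Beyond that, the only care needed is keeping the bookkeeping of $t$ versus $s$ straight so that the geometric sum converges.
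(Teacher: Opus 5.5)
Your proof is correct. The case $s=5$ matches the paper exactly: part \pref{lem:g1g2bound} of \cref{Lem:supportsize} is applied to the orbit types $5$ and $3+2$, because $m=4$ in part \pref{m/2Lemma} is too weak.

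For $s\ge 6$ you organise the count differently. The paper fixes a threshold $x=2\lceil 4\log_2 n\rceil$ and applies part \pref{m/2Lemma} only twice: with $m=6$ for the at most $n^{2x}$ pairs having $6\le s\le x$, and with $m=x$ for all $(n!)^2$ pairs having $s>x$. This works because $x\log_2 n=o(n)$, while $\frac{x}{2}\bigl(n-\frac{x}{2}-1\bigr)$ beats $2n\log_2 n$. You instead take $m=2\lfloor s/2\rfloor$ for every $s$ and sum over $t$, using $t(n-t-1)\ge 3n-12+(t-3)(n/2-4)$. Each increase of $t$ then gains a factor $2^{-(n/2-4)}$ in the graph count but costs only $n^{6}$ in the pair count. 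Your route needs no tuned threshold and gives a marginally sharper total: $2^{\binom n2-3n+O(\log n)}$ rather than $2^{\binom n2-3n+O((\log n)^2)}$. The paper's route needs only crude pair counts and two fixed values of $m$.

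One small point: since $s\le n$, you always have $\lfloor s/2\rfloor\le\lfloor n/2\rfloor$, so the $\min$ defining $t$ is vacuous. The hypothesis $t\le n/2$ of your first bullet therefore always holds, so the geometric-series bullet already covers every $s\ge 6$. Your second bullet is redundant, though harmless.
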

\begin{proof}
Let $g_1$ and $g_2$ be as in the statement, and let $G=\langle g_1,g_2 \rangle$.
We consider two cases.

\begin{case}
    $|\supp(G)|=5$.
\end{case} 
The number of  pairs $(g_1,g_2)$  in this case is at most
\[
        \binom n5(5!)^2.
\]
The support of $G$ is either a single orbit of size
$5$, or a union of two orbits of sizes $3$ and $2$.
For each fixed pair $(g_1,g_2)$, Lemma~\ref{Lem:supportsize}(\ref{lem:g1g2bound}) gives at most
\[
       2^{n-1} 2^{\binom{n-5}{2}} = 2^{\frac{n^2-9n+28}{2}}
\]
graphs $X$ when $G$ has an orbit of size $5$, and at most
\[
    2^{n-1} 2^{n-4} 2^{\binom{n-5}{2}} = 2^{\frac{n^2-7n+20}{2}}
\]
when $G$ has an orbit of size $3$ and one of size $2$. The
total contribution of this case is thus at most
\begin{equation*}
\label{eq:SuppG=5}
        \binom n5(5!)^2 
        \left(
            2^{\frac{n^2-9n+28}{2}}+ 2^{\frac{n^2-7n+20}{2}}
        \right)
        =
        o\left(2^{\frac{n^2-5n+4}{2}}\right).
\end{equation*}

\begin{case}
    $|\supp(G)|\geq 6$.
\end{case}
Let $x\coloneqq 2\left\lceil 4\log_2 n\right\rceil$. Note that  $x$ is a positive even integer.

\begin{subcase}
   $ x\geq |\supp(G)|\geq 6$.
\end{subcase}
The number of  pairs $(g_1,g_2)$  in this case is at most
\[
        \binom nx (x!)^2
        \le
         n^{2x}.
\]
For each fixed pair $(g_1,g_2)$, Lemma~\ref{Lem:supportsize}(\ref{m/2Lemma})
applied with $m=6$ gives at most
\[
        2^{\binom n2-3(n-4)}
        =
        2^{\frac{n^2-7n+24}{2}}
\]
graphs $X$ satisfying $(g_1,g_2)\in \Aut(BX)$. The
total contribution of this case is thus at most
\begin{equation*}
\label{eq:x>=SuppG>=6}
        n^{2x}2^{\frac{n^2-7n+24}{2}}=2^{\frac{n^2-7n+24}{2}+2x\log_2 n}=o\left(2^{\frac{n^2-5n+4}{2}}\right).
\end{equation*}
%Dividing by $2^{\binom{n-4}{2}+2n-8}$ gives
%\[
%         n^{2x}2^{-n+10}
%        =
%        2^{O((\log_2 n)^2)-n}
%        \longrightarrow 0 \text{ as } n\rightarrow \infty.
%\]
%Therefore, this contribution is
%\[
%o\left(2^{\binom{n-4}{2}+2n-8}\right).
%\]

\begin{subcase}
    $|\supp(G)|>x$.
\end{subcase}
The number of  pairs $(g_1,g_2)$  in this case is at most
\[
        (n!)^2\le n^{2n}.
\]
For each fixed pair $(g_1,g_2)$, Lemma~\ref{Lem:supportsize}(\ref{m/2Lemma})
applied with $m=x$ gives at most
\[
        2^{\binom n2-\frac x2\left(n-\frac x2-1\right)}.
\]
graphs $X$ satisfying $(g_1,g_2)\in \Aut(BX)$. The total contribution of this case is thus at most
\[
        n^{2n}2^{\binom n2-\frac x2\left(n-\frac x2-1\right)}=2^{\binom n2-\frac x2\left(n-\frac x2-1\right)+2n\log_2 n}.
\]
Dividing by $2^{\frac{n^2-5n+4}{2}}$ gives
\[
 2^{2n-2-\frac x2\left(n-\frac x2-1\right)+2n\log_2 n }.
\]
Since $x=2\lceil 4\log_2 n\rceil$, the above exponent tends to $-\infty$ when $n\rightarrow \infty$. Therefore this ratio is
\begin{equation*}
\label{eq:SuppG>x}
2^{2n-2-\frac x2\left(n-\frac x2-1\right)+2n\log_2 n } = o\left(1\right),
\end{equation*}   
which completes the proof.
\end{proof}

\begin{lemma}\label{lem:TwoDoubleTranspositions}
    The number of graphs with vertex-set $[n]$ admitting two distinct double transpositions as automorphisms is at most \[n^82^{\frac{n^2-7n+48}{2}}.\]
\end{lemma}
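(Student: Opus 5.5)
The plan is to count, for each fixed pair of distinct double transpositions, the graphs admitting both as automorphisms, and then sum over pairs. There are at most $n^4$ double transpositions on $[n]$, hence at most $n^8$ ordered pairs $(\sigma,\rho)$. So it suffices to show that, for fixed distinct double transpositions $\sigma,\rho$, the number of graphs $X$ on $[n]$ with $\sigma,\rho\in\Aut(X)$ is at most $2^{\frac{n^2-7n+48}{2}}$.

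Fix such $\sigma,\rho$ and let $G\coloneqq\langle\sigma,\rho\rangle$. The key observation is that the counting arguments of Lemma~\ref{Lem:supportsize} apply verbatim to graphs with $G\le\Aut(X)$. Indeed, those arguments used only one fact: the neighbourhood of one representative $v$ of each nontrivial orbit $O$ of $G$ determines $N_X(u)$ for all $u\in O$. This holds here since $N_X(v^g)=N_X(v)^g$ for all $g\in G$. (Equivalently, $(g,g)\in\Aut(BX)$ for $g\in G$, although $G$ is then not generated by a single pair, so it is cleaner to rerun the argument than to cite the lemma literally.) Thus both bound (\ref{lem:g1g2bound}) and bound (\ref{m/2Lemma}) of Lemma~\ref{Lem:supportsize} hold, with $s=|\supp(G)|$ and the orbit sizes of $G$.

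Next I would split according to $s$. Since $\sigma\neq\rho$, we have $s\ge 4$, and I would handle the cases as follows.
\begin{itemize}
\item If $s=4$, then both double transpositions live on the same $4$-set and are distinct, so $G$ is the Klein group acting transitively on that set. There is a single orbit of size $4$, and bound (\ref{lem:g1g2bound}) gives the exponent $(n-1)+\binom{n-4}{2}=\frac{n^2-7n+18}{2}$. The configuration with two orbits of size $2$ cannot occur, since only one double transposition preserves two given pairs pointwise-setwise in that way.
\item If $s=5$, the orbits have sizes $5$ or $3+2$. Bound (\ref{lem:g1g2bound}) gives at most $2^{\frac{n^2-9n+28}{2}}$ or $2^{\frac{n^2-7n+20}{2}}$, exactly as in the proof of Lemma~\ref{Lem:supportsize5}.
\item If $s\ge 6$ (and $s\le 8$ anyway), I would apply bound (\ref{m/2Lemma}) with $m=6$, giving $2^{\binom n2-3(n-4)}=2^{\frac{n^2-7n+24}{2}}$.
\end{itemize}
In every case the exponent is at most $\frac{n^2-7n+48}{2}$, which leaves some slack.

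Multiplying by the $n^8$ pairs then yields the claim. The only step needing genuine care is the justification that the orbit-representative counting transfers from the $(g_1,g_2)\in\Aut(BX)$ setting to $G\le\Aut(X)$, together with ruling out the two-orbits-of-size-$2$ configuration when $s=4$. Both are short checks; the remaining estimates are routine arithmetic.
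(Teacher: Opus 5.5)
Your proof is correct, but it computes the per-pair bound differently from the paper.

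The paper does not reuse Lemma~\ref{Lem:supportsize}. For a fixed pair $\alpha,\beta$ with $r=|\supp(\langle\alpha,\beta\rangle)|$, it gives one uniform count:
\begin{enumerate}
\item edges inside the support: trivially at most $2^{\binom r2}$;
\item edges among the fixed vertices: $2^{\binom{n-r}{2}}$;
\item edges from each fixed vertex to the support: at most $2^{r-3}$. This uses the observation that $\alpha$ and $\beta$ together involve at least three distinct transpositions, so $\langle\alpha,\beta\rangle$ has at most $r-3$ orbits on its support.
\end{enumerate}
This gives the exponent $\frac{n^2-7n+6r}{2}\le\frac{n^2-7n+48}{2}$ without any case analysis.

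Your route uses the stronger fact that whole neighbourhoods of non-representative vertices are determined. That costs you two things:
\begin{enumerate}
\item the transfer of the argument of Lemma~\ref{Lem:supportsize} from the hypothesis $(g_1,g_2)\in\Aut(BX)$ to $\langle\sigma,\rho\rangle\le\Aut(X)$, which you justify correctly;
\item a split into $s=4$, $s=5$ and $6\le s\le 8$, with the orbit structure identified by hand when $s=4$ and $s=5$.
\end{enumerate}
In return you get the sharper exponent $\frac{n^2-7n+24}{2}$, which is where your slack comes from.

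Your exclusion of two orbits of size $2$ when $s=4$ plays the same role as the paper's ``at most $r-3$ orbits'' observation at $r=4$. It is essential: that configuration would give exponent $\frac{n^2-5n+12}{2}$, which is too large. The paper's version is uniform in $r$ and self-contained; yours is finer but depends on more machinery.
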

\begin{proof}
Note that $\Sym(n)$ contains $3\binom{n}{4}$ double transpositions, so there are at most $n^8$ choices for a pair $\alpha,\beta\in\Sym(n)$ of distinct double transpositions.  We estimate the number of graphs with vertex-set $[n]$ having $\alpha$ and $\beta$ as automorphisms. Let $G:=\langle \alpha,\beta\rangle$,  $U \coloneqq \supp(G)$  and  $r\coloneqq |U|$. Clearly, $4\leq r\leq 8$. 

Each of $\alpha$ and $\beta$ is a product of two disjoint transpositions. Since $\alpha$ and $\beta$ are distinct, these gives us at least $3$ distinct transpositions and it follows that $G$ has at most $r-3$ orbits on $U$.

There are $2^{\binom{n-r}{2}}$ choices for the edges within $[n]\setminus U$,  and at most $2^{\binom{r}{2}}$ choices for the edges within $U$. Finally, if $v\in [n]\setminus U$ and $u\in U$, then $v$ is adjacent to $u$ if and only if it is adjacent to every element of the $G$-orbit of $u$. It follows that there are at most $2^{r-3}$ choices for the edges between $v$ and $U$, and thus $2^{(r-3)(n-r)}$ choices for edges between $[n]\setminus U$ and $U$. This gives us an upper bound of 

\[
2^{\binom{n-r}{2}+(r-3)(n-r)+\binom r2}
=
2^{\frac{n^2-7n+6r}{2}}
\leq
2^{\frac{n^2-7n+48}{2}}\]
possible graphs.
\end{proof}

  \section{Main results}  \label{sec:main}

\begin{definition}
    \label{defn:AlmostAll}
    Let $\mathcal{F}$ and $\mathcal{H}$  be  families of graphs and let $\mathcal{G}_n$ be the family of all graphs with vertex-set $[n]$. We say that ``\textit{almost all graphs in $\mathcal{F}$ are in  $\mathcal{H}$}'' if
    \[
    \lim_{n\rightarrow \infty}
    \frac
    {|\mathcal{F}\cap\mathcal{G}_n\cap \mathcal{H}|
          }
        {
            |\mathcal{F}\cap \mathcal{G}_n| 
        } 
    =
    1.
    \]
    If the above holds with $\mathcal{F}=\bigcup_{n\in\mathbb{N}}\mathcal{G}_n$, then we say that ``almost all graphs are in  $\mathcal{H}$''.
\end{definition} 

Recall that \(f(n) = o(g(n))\) if
\[
\lim_{n \to \infty} \frac{f(n)}{g(n)} = 0.
\]

\begin{lemma}\label{counting}
Let $n$ be a positive integer. Then 
    \begin{enumerate}
        \item \label{counting-disconnected}The number of disconnected graphs with vertex-set $[n]$
    is $(1-o(1))n2^{\binom{n-1}{2}}$. In particular, almost all graphs are connected.
        \item \label{counting-bipartite} The number of bipartite graphs with vertex-set $[n]$ is at most $2^{\frac{n^2}{4} + n}$. In particular, almost all graphs are nonbipartite.
        \item \label{counting-twins} The number of graphs with vertex-set $[n]$ with twins is
        $(1-o(1))\binom{n}{2} 2^{\binom{n-1}{2}}$. In particular, almost all graphs are twin-free.
    \end{enumerate}
\end{lemma}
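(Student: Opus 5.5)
We handle the three parts separately. Each time, we count the graphs with the given property by summing over a natural parameter. We then show that one term dominates and that the rest contribute a negligible fraction of it.

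For (\ref{counting-disconnected}), a disconnected graph on $[n]$ has a connected component $C$ with $|C|=k\le n/2$. So the number of disconnected graphs is at most $\sum_{k=1}^{\lfloor n/2\rfloor}\binom nk 2^{\binom{k}{2}+\binom{n-k}{2}}$. The $k=1$ term is $n2^{\binom{n-1}{2}}$. For $k\ge 2$, we have $\binom{k}{2}+\binom{n-k}{2}=\binom{n}{2}-k(n-k)$, so the ratio of the $k$-th term to the first is at most $n^{k-1}2^{-(k-1)(n-k-1)}$. This is $2^{-\Omega(n)}$ for $k=2$, and for $2\le k\le n/2$ the terms decay geometrically, so the tail is $o(n2^{\binom{n-1}{2}})$. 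For the matching lower bound, we count graphs with an isolated vertex. By inclusion--exclusion, these number at least $n2^{\binom{n-1}{2}}-\binom n2 2^{\binom{n-2}{2}}=(1-o(1))n2^{\binom{n-1}{2}}$. Comparing with $2^{\binom n2}$ gives that almost all graphs are connected.

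For (\ref{counting-bipartite}), every bipartite graph is a subgraph of the complete bipartite graph determined by some bipartition. There are at most $2^n$ bipartitions, and each admits at most $2^{|P||Q|}\le 2^{n^2/4}$ graphs. This gives the bound $2^{n^2/4+n}=o(2^{\binom n2})$.

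For (\ref{counting-twins}), fix a pair $\{u,v\}$ and count the graphs in which $u,v$ are nonadjacent twins. The graph induced on $[n]\setminus\{v\}$ is arbitrary, and the edges at $v$ are then forced, giving exactly $2^{\binom{n-1}{2}}$ graphs. A union bound over pairs gives the upper bound $\binom n2 2^{\binom{n-1}{2}}$. For the lower bound we use Bonferroni: we subtract the number of graphs with two distinct twin pairs. Two twin pairs $\{u,v\}\neq\{u',v'\}$ involve at least three vertices.
\begin{itemize}
\item If they share a vertex, then $u,v,w$ are mutual twins. This forces two vertices' neighbourhoods, giving at most $2^{\binom{n-2}{2}}$ graphs, over $O(n^3)$ choices.
\item If they are disjoint, we get at most $2^{\binom{n-1}{2}-(n-3)}$ graphs, since the second twin condition fixes $v'$'s adjacencies to the $n-3$ vertices other than $u,v,v'$, and it does so independently of the first. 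There are $O(n^4)$ such choices.
\end{itemize}
Both contributions are $O(n^4 2^{-n})\cdot 2^{\binom{n-1}{2}}=o\bigl(\binom n2 2^{\binom{n-1}{2}}\bigr)$. Dividing the twin count by $2^{\binom n2}$ gives $\binom n2 2^{-(n-1)}\to 0$.

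The main obstacle is the twin lower bound. We must check carefully that in the disjoint case, imposing the second twin condition still halves the count by roughly $2^{n-3}$. Concretely, we must make sure the two forced neighbourhoods do not overlap in a way that makes the constraints dependent. To handle this, we fix a vertex ordering in which $v$ and $v'$ come last. Then the choices for $v$'s and $v'$'s adjacencies among the earlier vertices are each forced, and the only free bit left is the possible edge $vv'$.
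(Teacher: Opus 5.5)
Your proof is correct. Parts (2) and (3) follow the paper's own argument. In (2) the paper sums $\binom{n}{k}2^{k(n-k)}$ over bipartitions and bounds each $2^{k(n-k)}$ by $2^{n^2/4}$. In (3) it uses the same first two Bonferroni inequalities with exactly $2^{\binom{n-1}{2}}$ graphs per twin pair. One small point there: the paper states that each pairwise intersection has exactly $2^{\binom{n-2}{2}}$ graphs, in the disjoint case as well. The bit you leave free for the edge $vv'$ is in fact forced, since $v\sim v'$ iff $u\sim v'$ iff $u\sim u'$, and both $u,u'$ lie in the already chosen part. So your bound is off by a harmless factor of $2$, and no dependence issue arises.

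The genuine difference is in (1). The paper does not prove it but cites Flajolet--Sedgewick (Example II.15). You instead give a self-contained elementary argument:
\begin{itemize}
\item an upper bound via a union bound over the vertex set of a smallest component;
\item a matching lower bound from graphs with an isolated vertex, via Bonferroni.
\end{itemize}
This works. To make the ``geometric decay'' precise, note that $k\le n/2$ gives $n-k-1\ge n/2-1$, so the $k$-th term is at most $\bigl(2n2^{-n/2}\bigr)^{k-1}$ times the first. The tail is therefore $O\bigl(n2^{-n/2}\bigr)$ relative to $n2^{\binom{n-1}{2}}$. Your route makes the lemma self-contained and shows directly that graphs with an isolated vertex dominate the disconnected ones. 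The paper's citation buys brevity.
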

\begin{proof}
\mbox{}

\pref{counting-disconnected}: See for example \cite[Example II.15, p.138]{AnalyticComb_Flajolet}.

\pref{counting-bipartite}:
    To count the bipartite graphs with vertex-set $[n]$, we consider all possible bipartitions of $[n]$ and then all possible edge sets for each of those bipartitions. Let $A$ and $B$ form a partition of $[n]$ with $0< |A|\leq |B| < n$. Writing $k\coloneqq |A|$, there are $\binom{n}{k}$ choices for $A$ and  $k(n-k)$ possible edges between $A$ and $B$. Hence the following is  an upper bound on the number of bipartite graphs with vertex-set $[n]$:

    \[\sum_{k=1}^{\floor{\frac{n}{2}}}\binom{n}{k}2^{k(n-k)} \leq 2^{\frac{n^2}{4}}\sum_{k=1}^{\floor{\frac{n}{2}}}\binom{n}{k}\leq 2^{\frac{n^2}{4}}\sum_{k=0}^{n}\binom{n}{k}\leq 2^{\frac{n^2}{4} + n}.\]

\pref{counting-twins}:
Let $\TW_n$ denote the set of graphs with vertex-set $[n]$ having twin
vertices, and, for distinct $i,j\in[n]$, let $\TW_n(i,j)$ denote the set
of graphs with vertex-set $[n]$ in which $i$ and $j$ are twins. Clearly,
\[
    \TW_n=\bigcup_{\{i,j\}\in\binom{[n]}{2}}\TW_n(i,j).
\]
By the first two Bonferroni inequalities (that is, the first two truncations of the inclusion–exclusion formula), we obtain
\begin{equation}\label{eq:Bonferroni}
    \sum_{\{i,j\}\in\binom{[n]}{2}}|\TW_n(i,j)|
    -
    \sum_{\substack{
        \{\{i,j\},\{k,\ell\}\}\subseteq \binom{[n]}{2}\\
        \{i,j\}\neq\{k,\ell\}
    }}
    |\TW_n(i,j)\cap\TW_n(k,\ell)|
    \leq |\TW_n|
    \leq
    \sum_{\{i,j\}\in\binom{[n]}{2}}|\TW_n(i,j)|.
\end{equation}

Let $i,j\in[n]$ be distinct. A graph with vertex-set $[n]$ in which
$i$ and $j$ are twins is uniquely determined by its subgraph induced on
$[n]\setminus\{j\}$. Hence,
\[
    |\TW_n(i,j)|=2^{\binom{n-1}{2}}.
\]
and 
\begin{equation}\label{eq:FirstEstimate}
    \sum_{{\{i,j\}\in\binom{[n]}{2}}}|\TW_n(i,j)|
    =
    \binom{n}{2}2^{\binom{n-1}{2}}.
\end{equation}
%The upper bound in \pref{eq:Bonferroni} therefore gives
%\[
%    |\TW_n|
%    \leq
%    \sum_{i,j\in [n],i\neq j}|\TW_n(i,j)|
%    =
%    \binom{n}{2}2^{\binom{n-1}{2}}.
%\]

There are at most $n^4$ choices for $(i,j,k,\ell)$ and, depending on whether $\{i,j\}\cap\{k,\ell\}=\varnothing$ or not, we either get a set of three distinct twins, or two disjoint sets of disjoint twins. Either way, we get exactly two ``redundant'' vertices, that is vertices whose neighbourhood is determined by their twins. It follows that, for fixed $(i,j,k,\ell)$, we have $$|\TW_n(i,j)\cap\TW_n(k,\ell)|=2^{\binom{n-2}{2}}.$$

This implies that \[
    \sum_{\substack{
        \{\{i,j\},\{k,\ell\}\}\subseteq \binom{[n]}{2}\\
        \{i,j\}\neq\{k,\ell\}
    }}
    |\TW_n(i,j)\cap\TW_n(k,\ell)|\leq n^42^{\binom{n-2}{2}}.
\]
but 
\[    \lim_{n\to\infty}
    \frac{
        n^42^{\binom{n-2}{2}}
    }{
        \binom{n}{2}2^{\binom{n-1}{2}}
    }
    =
    \lim_{n\to\infty}
    \frac{n^4}{\binom{n}{2}}\,2^{-(n-2)}
    =
    0.\]

Together with \pref{eq:Bonferroni} and \pref{eq:FirstEstimate}, this yields
\[
    |\TW_n|
    =
    (1-o(1))
    \binom{n}{2}2^{\binom{n-1}{2}},
\]
as desired.

\end{proof}

\zero*
%\begin{thm}\label{AlmostAllGraphsAreStable}
%Almost all graphs $X$ satisfy $\Aut(BX)=\langle \tau\rangle$.
%\end{thm}

\begin{proof}
Combining \fullcref{counting}{disconnected} and \fullcref{counting}{bipartite}, we conclude that almost all graphs are connected and nonbipartite. Hence, without loss of generality, we may assume that $X$ is connected and nonbipartite. As noted earlier, this implies that $BX$ is connected and thus $\{V(X)\times \{0\},V(X)\times \{1\}\}$ is the  unique bipartition of $BX$. It follows that, if $\Aut(BX)> \langle \tau \rangle$, then there exist $g_1,g_2\in \Sym(V)$ such that $(\id,\id)\neq (g_1,g_2)\in \Aut(BX)$.  Let $G\coloneqq\langle g_1,g_2\rangle$. Note that $|\supp(G)|\geq 2$.

We will give an upper bound on the number of graphs $X$ with vertex-set $[n]$  such that $(g_1,g_2)\in \Aut(BX)$. We closely follow the ideas of the proof of \cite[Lemma 2.3.2]{GodsilRoyle}. Let $x\coloneqq 4\lceil{(\log_2 n)^2}\rceil$. Note that $x$ is a positive even integer. 

\begin{case} 
$|\supp(G)|\leq x$.
\end{case}
The number of  pairs $(g_1,g_2)$  in this case is at most
\[
        \binom{n}{x} (x!)^2\leq n^{2x}.
\]
For each fixed pair $(g_1,g_2)$, Lemma~\ref{Lem:supportsize}(\ref{m/2Lemma}) applied with $m=2$ gives at most
\[
2^{\binom{n}{2}-n+2}
\]
graphs $X$ satisfying $(g_1,g_2)\in \Aut(BX)$. The
total contribution of this case is thus at most
\[
         n^{2x}2^{\binom{n}{2}-n+2}= 2^{\binom{n}{2}-n+2+2x\log_2 n}=o\left(2^{\binom{n}{2}}\right).
\]
\begin{case} 
$|\supp(G)|> x$.
\end{case}
The number of  pairs $(g_1,g_2)$  in this case is at most
\[
        (n!)^2 \leq n^{2n}.
\]
For each fixed pair $(g_1,g_2)$, Lemma~\ref{Lem:supportsize}(\ref{m/2Lemma}) applied with $m=x$ gives at most
\[
2^{\binom{n}{2}-\frac{x}{2}(n-\frac{x}{2}-1)}
\]
graphs $X$ satisfying $(g_1,g_2)\in \Aut(BX)$. The
total contribution of this case is thus again at most
\[
         n^{2n}2^{\binom{n}{2}-\frac{x}{2}(n-\frac{x}{2}-1)} = 2^{\binom{n}{2}-\frac{x}{2}(n-\frac{x}{2}-1)+2n\log_2(n)}=o\left(2^{\binom{n}{2}}\right),
\]
which concludes the proof.
\end{proof}

We now give an upper bound on the number of unstable twin-free connected   graphs which will be used in the proofs of Theorems~\ref{thm:unstable have twins} and \ref{thm:allmost all twin-free are disconnected}.
\begin{lemma}\label{lem:connected-twin-free-unstable}
The number of  unstable twin-free connected graphs
with vertex-set $[n]$ is
$o\left(2^{\binom{n-1}{2}}\right)$.
\end{lemma}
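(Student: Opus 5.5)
Split the unstable twin-free connected graphs with vertex-set $[n]$ into bipartite and nonbipartite ones, and bound each class separately.

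The bipartite ones number at most $2^{\frac{n^2}{4}+n}$ by \fullcref{counting}{bipartite}. Since $\binom{n-1}{2}=\frac{n^2-3n+2}{2}$ and $\frac{n^2}{4}+n-\frac{n^2-3n+2}{2}\to-\infty$, this bound is $o\left(2^{\binom{n-1}{2}}\right)$.

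For a nonbipartite graph $X$ in our family, Lemma~\ref{basic} gives $g_1\neq g_2$ in $\Sym([n])$ with $(g_1,g_2)\in\Aut(BX)$. Let $G=\langle g_1,g_2\rangle$. Since $X$ is twin-free, \fullcref{lem:split}{lem:support at least 4} gives $|\supp(G)|\ge 4$, and we split according to whether $|\supp(G)|=4$ or $|\supp(G)|\geq 5$.

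If $|\supp(G)|\geq 5$, Lemma~\ref{Lem:supportsize5} shows that the number of such graphs is $o\left(2^{\frac{n^2-5n+4}{2}}\right)$. Since $\frac{n^2-5n+4}{2}=\binom{n-1}{2}-(n-1)$, this is even $o\left(2^{\binom{n-1}{2}-n+1}\right)$, which is more than enough.

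If $|\supp(G)|=4$, Corollary~\ref{COR: if suppG has order 4 then Y=X(A,B)} gives $X=Y(A,B)$, where $Y$ is the graph induced on the $n-4$ fixed vertices and $A,B\subseteq V(Y)$. We count such labeled graphs directly:
\begin{itemize}
\item at most $n^4$ choices for the labels $a_1,a_2,b_1,b_2$;
\item $2^{\binom{n-4}{2}}$ choices for $Y$;
\item $2^{2(n-4)}$ choices for the pair $(A,B)$.
\end{itemize}
This gives at most
\[
n^4\, 2^{\binom{n-4}{2}+2n-8}=n^4\,2^{\frac{n^2-5n+4}{2}}=n^4\,2^{\binom{n-1}{2}-(n-1)}=o\left(2^{\binom{n-1}{2}}\right).
\]
Summing the three contributions proves the lemma.

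The most delicate point is confirming the case split is exhaustive. This rests on Lemma~\ref{basic} requiring nonbipartiteness, which is exactly why bipartite graphs are handled separately by crude counting.
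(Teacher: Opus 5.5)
Your proof is correct, and it has the same skeleton as the paper's proof. Like the paper, you discard bipartite graphs via Lemma~\ref{counting}(\ref{counting-bipartite}), use Lemma~\ref{basic} and Lemma~\ref{lem:split}(\ref{lem:support at least 4}) to get $|\supp(G)|\geq 4$, and dispose of $|\supp(G)|\geq 5$ with Lemma~\ref{Lem:supportsize5}.

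The one real difference is the case $|\supp(G)|=4$. The paper does not invoke Corollary~\ref{COR: if suppG has order 4 then Y=X(A,B)} there. It bounds the number of pairs $(g_1,g_2)$ by $\binom n4(4!)^2$ and applies Lemma~\ref{Lem:supportsize}(\ref{m/2Lemma}) with $m=4$, which gives at most $2^{\binom n2-2n+6}=2^{\binom{n-1}{2}-n+5}$ graphs per pair.

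The paper's route needs nothing beyond the support size and the generic orbit-counting bound. Yours instead uses the structural corollary, which applies because $X$ is twin-free. In exchange it gives the sharper bound $n^4\,2^{\frac{n^2-5n+4}{2}}$, which matches the lower bound of Proposition~\ref{ConstructionBounds} up to the factor $n^4$. For the present lemma either margin, of order $\mathrm{poly}(n)\,2^{-n}$ relative to $2^{\binom{n-1}{2}}$, is enough.
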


\begin{proof}
Using Lemma~\ref{counting}(\ref{counting-bipartite}) it is not dificult to see that the
number of bipartite graphs is
$o\left(2^{\binom{n-1}{2}}\right)$.
It therefore suffices to count unstable twin-free connected  nonbipartite
graphs. For each such graph $X$, by Lemma~\ref{basic} there exist
$g_1,g_2\in\Sym(n)$ with $g_1\neq g_2$ and
$(g_1,g_2)\in\Aut(BX)$. Let $G=\langle g_1,g_2\rangle$.
By Lemma~\ref{lem:split}(\ref{lem:support at least 4}),
we have $|\supp(G)|\geq4$. There are at most $\binom n4(4!)^2$ pairs $(g_1,g_2)$
with $|\supp(G)|=4$. For each fixed pair,
Lemma~\ref{Lem:supportsize}(\ref{m/2Lemma}), applied
with $m=4$, gives at most
$2^{\binom n2-2n+6}$ graphs. Thus their total number
is at most
\[
\binom n4(4!)^2\,2^{\binom n2-2n+6}
=o\left(2^{\binom{n-1}{2}}\right).
\]
By Lemma~\ref{Lem:supportsize5}, the number of graphs
admitting such a pair with $|\supp(G)|\geq5$ is
\[
o\left(2^{\frac{n^2-5n+4}{2}}\right)
=o\left(2^{\binom{n-1}{2}}\right).
\]
The result follows.
\end{proof}

\second*

\begin{proof}
Let $\U_n$ denote the set of unstable graphs
with vertex-set $[n]$, and let $\TW_n$ denote the set of
graphs on $[n]$ containing a pair of twins. Note that $\TW_n\subseteq \U_n$. 
%We shall prove that $|\U_n\setminus \TW_n| = o(|\TW_n|)$, from which the conclusion follows. 
By Lemma~\ref{counting}(\ref{counting-twins}), we have that
\begin{equation}\label{TwinCount}
        |\TW_n| = (1-o(1))\binom n2 2^{\binom{n-1}{2}}.
\end{equation}

We now estimate $|\U_n\setminus \TW_n|$, that is the number of unstable twin-free  graphs. First,  by Lemma~\ref{counting}(\ref{counting-disconnected}), the
number of disconnected graphs on $[n]$ is \[(1-o(1))n2^{\binom{n-1}{2}}.\] Comparing to (\ref{TwinCount}), we see this is $o(|\TW_n|)$. 
It remains to count unstable twin-free connected graphs. By
Lemma~\ref{lem:connected-twin-free-unstable}, their number is
\[
o\left(2^{\binom{n-1}{2}}\right)=o(|\TW_n|).
\]
Combining the estimates, we obtain
\[
|\U_n\setminus\TW_n|=o(|\TW_n|),
\]
and the result follows.
\end{proof}
%and (\ref{AlmostAllUnstableTwins}) follows. On the other hand, we saw near the start of the proof that the number of disconnected graphs and the number of bipartite graphs are both $o(|\TW_n|)$. Since every graph admitting twins is unstable, they are both $o(|\U_n|)$ (\ref{AlmostAllUnstableConnected}) as well and (\ref{AlmostAllUnstableNonBipartite}) immediately follow.

\begin{cor}
    The number of unstable graphs with vertex set $[n]$ is $(1-o(1))\binom{n}{2} 2^{\binom{n-1}{2}}$.
\end{cor}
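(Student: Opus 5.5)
The corollary follows by sandwiching $|\U_n|$ between the twin count and the twin count plus error terms, all of which were already estimated in the proof of \cref{thm:unstable have twins}. Write $\U_n$ for the set of unstable graphs on $[n]$ and $\TW_n$ for the set of graphs on $[n]$ with twins, as in that proof.

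For the lower bound, every graph with twins is unstable, so $\TW_n\subseteq\U_n$. By \fullcref{counting}{twins} this gives
\[
|\U_n|\geq|\TW_n|=(1-o(1))\binom{n}{2}2^{\binom{n-1}{2}}.
\]

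For the upper bound, we decompose $\U_n\setminus\TW_n$, the unstable twin-free graphs, into two kinds. The disconnected ones number at most $(1-o(1))n2^{\binom{n-1}{2}}$ by \fullcref{counting}{disconnected}. The connected ones number $o\bigl(2^{\binom{n-1}{2}}\bigr)$ by Lemma~\ref{lem:connected-twin-free-unstable}. Both quantities are $o\bigl(\binom n2 2^{\binom{n-1}{2}}\bigr)$, because $n/\binom n2\to0$. Hence
\[
|\U_n|\leq |\TW_n|+o\Bigl(\binom n2 2^{\binom{n-1}{2}}\Bigr)=(1+o(1))\binom n2 2^{\binom{n-1}{2}}.
\]

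Combining the two bounds gives $|\U_n|=(1+o(1))\binom n2 2^{\binom{n-1}{2}}$. This is the claimed statement: here "$(1-o(1))$" denotes a factor tending to $1$, and the sign of the $o(1)$ term is irrelevant. There is no real obstacle, since all the work is already done in the earlier lemmas and in the proof of \cref{thm:unstable have twins}. The only point to check is that the error terms are small relative to $\binom n2 2^{\binom{n-1}{2}}$ and not merely relative to $2^{\binom{n-1}{2}}$; the disconnected count is of order $n2^{\binom{n-1}{2}}$, which is still lower order.
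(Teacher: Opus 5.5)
Your proof is correct and follows the same route as the paper. The paper deduces the corollary from $\TW_n\subseteq\U_n$, Lemma~\ref{counting}(\ref{counting-twins}), and Theorem~\ref{thm:unstable have twins}, whose proof establishes $|\U_n\setminus\TW_n|=o(|\TW_n|)$ from exactly the disconnected and connected twin-free estimates you use; you have simply written that argument out explicitly.
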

\begin{proof}
    This follows directly from Theorem~\ref{thm:unstable have twins}, Lemma~\ref{counting}(3) and the fact that every graph admitting twins is unstable.
\end{proof}

\third*
\begin{proof}
Let $\UTF_n$ denote the set of unstable twin-free graphs with vertex-set $[n]$ and let $\DU_n$ denote the set of (unstable) twin-free disconnected graphs with vertex-set $[n]$. Clearly, $\DU_n \subseteq \UTF_n$. Observe that starting with a twin-free connected  graph with vertex-set $[n-1]$ and adding  an isolated vertex labelled $n$, we obtain a graph belonging to $\DU_n$. 
By Lemma \ref{counting}, it follows that $|\DU_n| \geq (1-o(1))2^{\binom{n-1}{2}}$. On the other hand, observe that $\UTF_n \setminus \DU_n$ consists of unstable  twin-free connected  graphs. By Lemma~\ref{lem:connected-twin-free-unstable} it follows that $|\UTF_n \setminus \DU_n|=o\left(2^{\binom{n-1}{2}}\right)=o(|\DU_n|)$, concluding the proof.
\end{proof}

%\begin{cor}\label{cor:almost all unstable are connected}
%    Almost all unstable graphs are connected.
%\end{cor}
%\begin{proof}
%    As was observed in the proof of Theorem~\ref{thm:unstable have twins} that the number of disconnected graphs is negligible compared to the number of graphs admitting twins. Since every graph admitting twins is unstable, it follows that disconnected graphs are negligible among unstable graphs. 
%\end{proof}

%\begin{cor}\label{cor:almost all connected have twins}
%Almost all connected unstable graphs have twins.
%\end{cor}

%\begin{proof}
%Since almost all unstable graphs are connected (\Cref{cor:almost all unstable are connected}), and almost all unstable graphs have twins (\Cref{thm:unstable have twins}), it follows that almost almost all connected unstable graphs have twins.
%\end{proof}

\begin{definition}\label{DefArise}
We say that a graph $Y$ \emph{arises via Construction~\ref{constructionX(A,B)}} if there exist distinct $a_1,a_2,b_1,b_2\in V(Y)$ and $A,B\subseteq V(Y)\setminus \{a_1,a_2,b_1,b_2\}$ such that $Y=X(A,B)$, where $X$ is the subgraph of $Y$ induced on $V(Y)\setminus \{a_1,a_2,b_1,b_2\}$.
\end{definition}
In the context of \Cref{DefArise}, we will allow the vertices $a_1,a_2,b_1,b_2$ of $Y$ to have other labels, such as elements of $[n]$.

Let $\CX_n$ be the set of graphs with vertex-set $[n]$ that arise via Construction~\ref{constructionX(A,B)}. Let $\UTFC_n$ denote the set of unstable twin-free connected graphs with vertex-set $[n]$. 

\begin{prop}\label{ConstructionBounds}
We have 
%\begin{enumerate}
 %   \item 
 $|\CX_n\cap \UTFC_n|\geq (1-o(1))2^{\frac{n^2-5n+4}{2}}$.
%\label{ConstructionLower}
%    \item $|\CX_n|\leq (1-o(1)) n^42^{\frac{n^2-5n+4}{2}}.$ {\color{red} we don't need this one anymore?}\label{ConstructionUpper}
%\end{enumerate}

\end{prop}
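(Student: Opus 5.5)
We will prove the lower bound by exhibiting, for fixed labels, many pairs $(X,A,B)$ for which $X(A,B)$ lies in $\UTFC_n$. Fix the four labels $n-3,n-2,n-1,n$ to play the roles of $a_1,a_2,b_1,b_2$, and let $X$ range over graphs with vertex-set $[n-4]$, with $A,B$ ranging over subsets of $[n-4]$. Each such triple $(X,A,B)$ gives a distinct graph on $[n]$: $X$ is recovered as the induced subgraph on $[n-4]$, and $A$ and $B$ are recovered as $N(n-3)\cap[n-4]$ and $N(n-1)\cap[n-4]$. The total number of triples is
\[
2^{\binom{n-4}{2}}\cdot 2^{n-4}\cdot 2^{n-4}=2^{\frac{(n-4)(n-5)}{2}+2n-8}=2^{\frac{n^2-5n+4}{2}},
\]
which is exactly the target exponent. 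By Definition~\ref{DefArise}, every graph produced this way lies in $\CX_n$.

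It remains to show that all but a $o(1)$ fraction of these graphs lie in $\UTFC_n$. By Proposition~\ref{X(AB)Unstable}, $X(A,B)$ is always unstable. The same proposition gives connectedness and twin-freeness as soon as $X$ is connected and twin-free and $A\cup B\neq\varnothing$. By Lemma~\ref{counting}\pref{counting-disconnected} and \pref{counting-twins}, applied with $n-4$ in place of $n$, all but $o\left(2^{\binom{n-4}{2}}\right)$ graphs $X$ on $[n-4]$ are connected and twin-free. The condition $A=B=\varnothing$ excludes only a $2^{-2(n-4)}$ fraction of the pairs $(A,B)$.

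Multiplying these proportions shows that the number of good triples is
\[
(1-o(1))(1-2^{-2n+8})\,2^{\frac{n^2-5n+4}{2}}=(1-o(1))\,2^{\frac{n^2-5n+4}{2}},
\]
which is the claimed bound.

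The only point needing care is the last clause of Proposition~\ref{X(AB)Unstable}, which the paper calls immediate. If it needs justification, one checks it directly. Connectivity holds because $a_i$ and $b_i$ are joined and at least one of them has a neighbour in the connected graph $X$. For twin-freeness, note that twins must be nonadjacent with equal neighbourhoods. The vertices $a_1,a_2$ are separated by $b_1$, and likewise $b_1,b_2$ are separated by $a_1$. A new vertex and an old vertex $x\in X$ are separated because $x$ is not adjacent to any $b_j$ or $a_j$ in the required pattern: for instance, $N(a_1)\ni b_1$ while $N(x)\not\ni b_1$ unless $x=a_1$. Two old vertices are twins only if they are twins in $X$.
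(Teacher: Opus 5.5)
Your proof is essentially the paper's: fix the labels $n-3,\dots,n$, take $X$ connected and twin-free on $[n-4]$ (Lemma~\ref{counting} at $n-4$) and $(A,B)\neq(\varnothing,\varnothing)$, invoke Proposition~\ref{X(AB)Unstable}, and get injectivity by recovering $(X,A,B)$ from $Y$. One slip in your optional aside: every $x\in B$ is adjacent to $b_1$, so separate $a_1$ from an old vertex $x$ instead by noting that $x$ is adjacent to both or neither of $b_1,b_2$, whereas $a_1$ is adjacent to exactly one of them.
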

\begin{proof}
%We first prove the lower bound on $|\CX_n\cap \UTFC_n|$. 
Choose $X$ a connected twin-free graph with vertex-set $[n-4]$. By Lemma~\ref{counting}, there are 
$$(1-o(1))2^{\binom{n-4}{2}}$$ 
choices for $X$. Now, choose  $A,B\subseteq [n-4]$,
not both empty. There are 
$$2^{2(n-4)}-1$$ 
choices for $A$ and $B$. By Proposition~\ref{X(AB)Unstable}, $X(A,B)$ is an unstable twin-free connected graph. Let $Y$ be the graph obtained from $X(A,B)$ by relabelling $a_1,a_2,b_1,b_2$ with $n-3,n-2,n-1,n$, respectively. Clearly, $Y\in\CX_n\cap \UTFC_n$. Moreover,
$(X,A,B)$ can be recovered from $Y$ in the following way: $X$ is the subgraph of $Y$ induced by
$[n-4]$, $A=N(a_1)\cap [n-4]$ and $B=N(b_1)\cap [n-4]$. It follows that distinct $(X,A,B)$ yield distinct $Y$s and, in particular,
\begin{equation*}
        |\CX_n\cap \UTFC_n|
        \ge
        (1-o(1))2^{\binom{n-4}{2}+2n-8}=(1-o(1))2^{\frac{n^2-5n+4}{2}}, 
\end{equation*}
as required.

%We now prove the upper bound. Let $Y\in \CX_n$. By definition, there exist distinct $a_1,a_2,b_1,b_2\in [n]$ and $A,B\subseteq [n]\setminus \{a_1,a_2,b_1,b_2\}$ such that $Y=X(A,B)$, where $X$ is the subgraph of $Y$ induced on $[n]\setminus \{a_1,a_2,b_1,b_2\}$.
%There are $(1-o(1))n^4$ choices for $a_1,a_2,b_1,b_2$ and, once these are chosen, at most $2^{\binom{n-4}{2}}$ choices for $X$ and then at most $2^{n-4}$ choices for each of $A$ and $B$. Some of these choices may give the same $Y$, but we still get the upper bound
%\[
%|\CX_n|
%\leq
%(1-o(1))n^4
%2^{\binom{n-4}{2}+2(n-4)} = 
%(1-o(1))n^4
%2^{\frac{n^2-5n+4}{2}},
%\]
%as desired.
\end{proof}

\fourth*

\begin{proof}
We need to show 
\[
        |\UTFC_n\setminus \CX_n|=o\left(|\CX_n\cap \UTFC_n|\right).
\] 
By~\Cref{ConstructionBounds}, it suffices to show that 
\[
        |\UTFC_n\setminus \CX_n|=o\left(2^{\frac{n^2-5n+4}{2}}\right).
\]

Let $Y\in \UTFC_n\setminus \CX_n$. By \Cref{counting}(\ref{counting-bipartite}), the number of bipartite graphs is negligible, so we can assume that $Y$ is nonbipartite.  By Lemma~\ref{basic}, there exist $g_1,g_2\in \Sym(n)$ such that $g_1\neq g_2$ and $(g_1,g_2)\in \Aut(BY)$. Let $G=\langle g_1,g_2 \rangle$.
%Choose such a pair $(g_1,g_2)$, so that the size of the support of $G\coloneqq \langle g_1,g_2\rangle$ is smallest possible. 
By Lemma~\ref{lem:split}(\ref{lem:support at least 4}),  it follows that $|\supp(G)|\ge 4$. If $|\supp(G)|=4$, then by Corollary~\ref{COR: if suppG has order 4 then Y=X(A,B)} we have $Y \in \CX_n$, which is a contradiction. Hence, $|\supp(G)| \geq 5$ and the result follows from~\Cref{Lem:supportsize5}.
\end{proof}

\fifth*
\begin{proof}
Let $Y\in\CX_n\cap \UTFC_n$ and let $V:=V(Y)$. By definition, there exist distinct $a_1,a_2,b_1,b_2\in V$ and $A,B\subseteq V\setminus \{a_1,a_2,b_1,b_2\}$ such that $Y=X(A,B)$, where $X$ is the subgraph of $Y$ induced on $V\setminus \{a_1,a_2,b_1,b_2\}$. Let
\[
p\coloneqq (a_1,a_2)\in\Sym(V)
\qquad\text{and}\qquad
q\coloneqq (b_1,b_2)\in\Sym(V)
\]
Note that $p$ and $q$ have disjoint support and hence commute. It is clear that 
\[pq = (a_1,a_2)(b_1,b_2)\in \Aut(Y).\]
By Proposition~\ref{X(AB)Unstable}, we have that
\[
(p,q)=((a_1,0),(a_2,0))((b_1,1),(b_2,1))\in \Aut(BY),\]
and that
\[H\coloneqq\langle(p,q),\tau\rangle\cong \DihGrp_4.\]
Note that the kernel of the action of $H$ on the canonical bipartition  of $BY$ is
\[
K\coloneqq 
\left\{
(\id,\id),(p,q),(q,p),(pq,pq)
\right\}.\]
Recall that, from \Cref{ConstructionBounds}, we have that
\[|\CX_n\cap \UTFC_n|\geq (1-o(1))\left(2^{\frac{n^2-5n+4}{2}}\right).\] In order to establish the claim, it suffices to show that the subset $\CA_n$, consisting of graphs $Y\in\CX_n\cap \UTFC_n$ for which $\Aut(BY)>H$, is a negligible subset of $\CX_n$. In particular, we prove that
\[
|\CA_n|
=
o\left(2^{\frac{n^2-5n+4}{2}}\right).\]

Fix $Y\in\CA_n$. By \Cref{counting}(\ref{counting-bipartite}), the number of bipartite graphs is negligible, so we can assume that $Y$ is nonbipartite.  Since $Y$ is connected and nonbipartite, the assumption $\Aut(BY)>H$ implies that there exist $g_1,g_2\in \Sym(V)$ with $g_1\neq g_2$ such that \[(g_1,g_2)\in\Aut(BY)\setminus K.\]

Let $G\coloneqq \langle g_1,g_2\rangle$. By Lemma~\ref{lem:split}(\ref{lem:support at least 4}), we have that $|\supp(G)|\geq 4$. If $|\supp(G)|\geq 5$ , then by~\Cref{Lem:supportsize5}, the number of possibilities for $Y$ is at most $o\left(2^{\frac{n^2-5n+4}{2}}\right)$. We may thus assume that $|\supp(G)|=4$.
 We now consider the cases which occur in the proof of Corollary~\ref{COR: if suppG has order 4 then Y=X(A,B)}. In Case 1, 
there exists $A$ such that $Y=X(A,A)$ and it follows that $\Aut(Y)$ admits three distinct double transpositions (with support $V\setminus V(X)$). In Subcase 2.1, $\Aut(Y)$ contains the double transposition $g_1g_2$ and if $g_1g_2=pq$, then $(g_1,g_2)\in K$, contrary to the choice of $(g_1,g_2)$. Finally, Subcase 2.2 does not occur. Thus, in both cases that do occur, $\Aut(Y)$ admits two distinct double transpositions.
By Lemma \ref{lem:TwoDoubleTranspositions}, the number of such $Y$ is at most $o\left(2^{\frac{n^2-5n+4}{2}}\right)$. Hence, we obtain that
\[
|\CA_n|
=
o\left(2^{\frac{n^2-5n+4}{2}}\right),\]
as desired.
\end{proof}

\section*{Acknowledment}
The authors would like to thank to Rajko Nenadov for several helpful discussions regarding this problem.
Ademir Hujdurović gratefully acknowledges the hospitality of the University of Auckland and the award of a Kalman Visiting Scholarship in 2024, during which the research leading to this paper was initiated, as well as financial support from the Slovenian Research and Innovation Agency through research program P1-0404 and research projects N1-0481, N1-0428, N1-0459, J1-70035, J1-70046, J1-70047, and J1-60012.

\section*{Declaration of AI use}
AI was only used for minor polishing of the text.

\bibliography{References}{}
\bibliographystyle{plain}
\end{document}